\theoremstyle{plain}
\newtheorem{theorem}{Theorem}
\newtheorem*{theoremn}{Theorem}
\newtheorem{lemma}[theorem]{Lemma}
\newtheorem{corollary}[theorem]{Corollary}
\newcommand{\swing}{\mathbin{\raisebox{2.0pt}
       {\rotatebox{160}{$\curvearrowleft$}}}}
\newcommand{\length}{\tup{length}}
\begin{document}

\title[Some applications of the Cz\'edli-Schmidt Sequences]{Applying the Cz\'edli-Schmidt Sequences
to~congruence properties of planar semimodular~lattices}

\author{G. Gr\"{a}tzer} 
\email[G. Gr\"atzer]{gratzer@me.com}
\urladdr[G. Gr\"atzer]{http://server.maths.umanitoba.ca/homepages/gratzer/}

\date{April 6, 2020}
\subjclass[2020]{Primary: 06C10, Secondary: 06B10.}
\keywords{lattice, congruence, semimodular, planar, slim.}

\begin{abstract}
Following G.~Gr\"atzer and E.~Knapp, 2009,
a planar semimodular lattice $L$ is \emph{rectangular},
if~the left boundary chain
has exactly one doubly-irreducible element, $c_l$,
and the right boundary chain  
has exactly one doubly-irreducible element, $c_r$,
and these elements are complementary.

The Cz\'edli-Schmidt Sequences, introduced in 2012, 
construct rectangular lattices.
We use them to prove some structure theorems. 
In particular, we prove that for a slim
(no $\mathsf{M}_3$ sublattice) rectangular lattice~$L$,
the congruence lattice $\Con L$
has exactly $\length[c_l,1] + \length[c_r,1]$ 
dual atoms and a dual atom in $\Con L$
is a congruence with exactly two classes.
We also describe the prime ideals in a slim rectangular lattice. 
\end{abstract}

\maketitle

\section{Introduction}\label{S:intro}
%Section~\ref{S:intro}

\subsection{The Cz\'edli-Schmidt Sequences}
\label{S:SC}%Section~\ref{S:SC}

G.~Cz\'edli and E.\,T. Schmidt~\cite{CS12}
proved the Structure Theorem for 
Slim Rectangular Lattices, according to which 
every slim rectangular lattice can be constructed
from a planar distributive lattice, a grid,
with the Cz\'edli-Schmidt Sequences, 
see Section~\ref{S:Structure} for the definitions.
In this paper, we find new applications 
for the Cz\'edli-Schmidt Sequences.

\subsection{Congruence lattices of SPS lattices}\label{S:SPS}%Section~\ref{S:SPS}

The topic of this paper started 
in G. Gr\"atzer, H.~Lakser, 
and E.\,T. Schmidt \cite{GLS95}, 
where we proved that every finite distributive lattice $D$ 
can be represented as the congruence lattice 
of a PS (planar semimodular) lattice $L$. 
The sublattices $\SM3$
played a crucial role in the construction of $L$,  
so we asked
(\cite[Problem 1]{gG16} and \cite[Problems 4.7--4.10]{CFL2}) 
what happens if we only consider SPS lattices
(Slim PS, where ``slim''means that 
there is no $\SM 3$ sublattice)?

\subsection{The Two-cover Theorem}\label{S:Two}%Section~\ref{S:Two}

In~\cite[Theorem 5]{gG16}, 
I proved the Two-cover Theorem:
The congruence lattice of an SPS lattice 
has the property 
\begin{enumeratei}
\item[(2C)] Every join-irreducible 
congruence has at most two join-irreducible covers 
(in~the order of join-irreducible congruences).
\end{enumeratei}
See also \cite[Theorem 25.2]{CFL2},
G. Cz\'edli \cite[Theorem~1.1]{gC14}, 
and my paper~\cite{gG19}. 

G. Cz\'edli \cite[Theorem 1.1]{gC14} 
proved that the converse is false 
by exhibiting an eight-element 
distributive lattice, $\SD 8$ (see Figure~\ref{F:D8}), satisfying (2C), which cannot be represented
as the congruence lattice of an SPS lattice;
see also my paper \cite{gG15}.
\begin{figure}[htb]
\centerline{\includegraphics{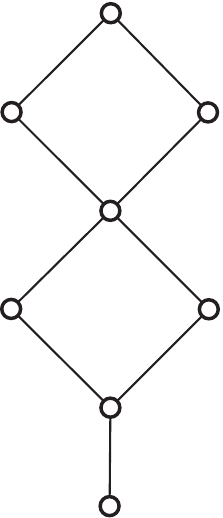}}
\caption{The distributive lattice $D_8$}
\label{F:D8}
%Figure~\ref{F:D8}
\end{figure}
In \cite{gG19}, I observed that
the three-element chain $\SC3$ cannot be represented 
either as the congruence lattice of~an SPS lattice.
This paper is the start of the present one.

\subsection{The main result}\label{S:main}
%Section~\ref{S:main}

Following G.~Gr\"atzer and E.~Knapp~\cite{GKn09},
a planar semimodular lattice (by definition, finite) 
$L$ is \emph{rectangular},
if~the left boundary chain
has exactly one doubly-irreducible element, $c_l$,
and the right boundary chain  
has exactly one doubly-irreducible element, $c_r$,
and these elements are complementary.
Let $\SB n$ denote the Boolean lattice with $n$ atoms.
 
In this paper, we use the Cz\'edli-Schmidt Sequences
to prove the following result.

\begin{theorem}\label{T:main}%Theorem~\ref{T:main}
Let $L$ be a slim rectangular lattice $L$
and let 
\[
   t = \length[c_l,1] + \length[c_r,1].
\]
Then the congruence lattice $\Con L$
has exactly $t$ dual atoms and a dual atom in~$\Con L$
is a congruence with exactly two classes. 
\end{theorem}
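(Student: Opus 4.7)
The plan is to pass through the nonzero join-prime elements of $L$, using the Cz\'edli-Schmidt Sequences inductively.

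First, the ``two-class'' assertion: a dual atom $\Theta \in \Con L$ yields a simple quotient $L/\Theta$, which inherits slim semimodularity. I would argue that the only simple slim semimodular lattice with more than one element is $\SC 2$: if $|L/\Theta| \geq 3$, then any atom $p$ of $L/\Theta$ generates a proper nontrivial congruence $\con(0,p)$, because slimness prevents the projectivity class of $[0,p]$ from exhausting all prime intervals. Hence $L/\Theta \cong \SC 2$, and $\Theta$ has exactly two classes.

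Second, a general finite-lattice observation reduces the counting: any two-class congruence partitions $L$ into an ideal and a filter, and such partitions correspond bijectively to nonzero join-prime elements $b$ via $F = [b)$. So the number of dual atoms of $\Con L$ equals the number of nonzero join-prime elements of~$L$.

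Third, the main step is an induction on the length of the Cz\'edli-Schmidt Sequence. In the base case, $L$ is a grid $\SC{m+1} \times \SC{n+1}$, and a direct coordinate check shows that the nonzero join-prime elements are exactly the $m + n$ elements on the two coordinate axes; since $\length[c_l,1] = n$ and $\length[c_r,1] = m$, the value $t = m + n$ matches. For the inductive step, a permissible fork inserted at a cover-preserving $\SB 2$ produces $L'$: the upper chains $[c_l,1]$ and $[c_r,1]$ are themselves chains (a feature of rectangular lattices) and hence contain no cover-preserving $\SB 2$ in which a fork could live, so $t$ is invariant. I would then check that the two new elements produced by the fork are not join-prime in $L'$, while every pre-existing join-prime remains join-prime, preserving the count.

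The main obstacle will be the bookkeeping of the fork step: one must verify both that the fork preserves rectangularity (which constrains which cover-preserving squares may receive a fork in a valid Cz\'edli-Schmidt Sequence) and that the count of nonzero join-primes is actually preserved under each permissible insertion. Once this is established, the induction yields exactly $t = \length[c_l,1] + \length[c_r,1]$ nonzero join-primes, hence the same number of dual atoms in $\Con L$, completing the proof.
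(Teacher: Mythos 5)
Your proposal has two genuine gaps, one in each half of the argument. The first concerns the two-class assertion, which is the heart of the theorem. You assert that the quotient $L/\bga$ by a dual atom ``inherits slim semimodularity''; homomorphic images of (slim, planar) semimodular lattices are not obviously semimodular, and no such closure result is available in the paper --- the author deliberately avoids needing one. Even granting it, your reason why no simple slim semimodular lattice with at least three elements exists (``slimness prevents the projectivity class of $[0,p]$ from exhausting all prime intervals'') is an assertion, not a proof. The paper's route in Theorem~\ref{T:dualatom} is substantively different: it classifies a dual atom $\bgp$ by the number $b$ of blocks it induces on the upper left boundary chain; the case $b \geq 3$ is killed by the Swing Lemma (Lemma~\ref{L:SPSproj}): one cannot climb up from a prime interval on the upper boundary, which forces $1_\fp \geq 1_\fq$ and contradicts $1_\fp < 0_\fq$; and for $b \leq 2$ the quotient is exhibited as a chain, or as a gluing of two chains along the ladder of Lemma~\ref{L:congclass2}, hence distributive, hence two-element since it is simple. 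Nothing in your sketch replaces the Swing Lemma step.

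The second gap is in the fork bookkeeping. A fork insertion adds $1 + n_l + n_r \geq 3$ new elements, not two: the middle element $m$ and two descending legs that end on the lower boundary chains. More importantly, your claim that the new elements are never join-prime while every old join-prime survives is false: in Case~3 of Lemma~\ref{L:prime1}, for an upper-boundary element $p$ whose complementary filter straddles the fork's square, the \emph{newly inserted} element $z_{r,n_r}$ becomes the generator of the prime filter complementary to $\id p$, displacing the old generator $q_{s-1}$. So new join-primes do appear and old ones do disappear; the count is preserved only because both sets are in bijection with the non-unit elements of the upper boundary chains, which fork insertion leaves fixed (Lemma~\ref{L:corners}, Corollary~\ref{C:upper}). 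Your reduction of the count to nonzero join-prime elements is sound (a two-class congruence is exactly a partition into a prime ideal and the complementary prime filter $\fil b$ with $b$ join-prime, every such partition is a congruence, and every two-class congruence is a dual atom), and your grid computation is correct; but the inductive step must track how the ideal--filter partitions are modified by the fork, as in the paper's Lemma~\ref{L:prime1}, rather than assert that the set of join-primes is literally unchanged.
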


Since $\Con L$ is distributive, 
we obtain the following statement.

\begin{corollary}\label{C:xx}%Corollary~\ref{C:xx}
Let $L$ be a slim rectangular lattice. 
Then  $\Con L$ has a filter isomorphic 
to the Boolean lattice $\SB t$.
\end{corollary}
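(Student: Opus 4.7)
The plan is to leverage Theorem~\ref{T:main} together with the distributivity of $\Con L$. Let $\alpha_1, \dots, \alpha_t$ be the $t$ dual atoms supplied by Theorem~\ref{T:main}, set $\mu = \alpha_1 \wedge \dots \wedge \alpha_t$, and propose the filter $F = [\mu, 1]$ as the desired copy of $\SB t$.

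The argument then reduces to a standard fact about finite distributive lattices, which I would verify via Birkhoff's representation of $\Con L$ as the lattice of down-sets of its poset of join-irreducibles $J(\Con L)$. Under this identification, the dual atoms of $\Con L$ are exactly the down-sets of the form $J(\Con L) \setminus \{p\}$ with $p$ a maximal element of $J(\Con L)$; hence the $\alpha_i$ are in bijection with $t$ maximal join-irreducibles $p_1, \dots, p_t$, and $\mu$ corresponds to $J(\Con L) \setminus \{p_1, \dots, p_t\}$. The elements of $F$ are then precisely the down-sets $S$ satisfying $J(\Con L) \setminus \{p_1, \dots, p_t\} \subseteq S \subseteq J(\Con L)$. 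Because each $p_i$ is maximal in $J(\Con L)$, any subset $T \subseteq \{p_1, \dots, p_t\}$ can be freely adjoined, and the assignment $T \mapsto (J(\Con L) \setminus \{p_1, \dots, p_t\}) \cup T$ yields the required lattice isomorphism $\SB t \to F$.

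There is no real obstacle once Theorem~\ref{T:main} is available: the corollary is pure finite distributive lattice theory, and the substantive work lives entirely in the theorem. The only point worth being careful about is that ``exactly $t$ dual atoms'' in a finite distributive lattice really does force a full Boolean filter of rank $t$ (rather than a proper quotient of $\SB t$), and this is exactly what the maximality of the $p_i$ in $J(\Con L)$ guarantees.
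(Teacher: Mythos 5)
Your proposal is correct and follows essentially the same route as the paper, which simply notes that the corollary follows from Theorem~\ref{T:main} because $\Con L$ is distributive; you have merely filled in the standard Birkhoff-representation details showing that in a finite distributive lattice the filter generated by the meet of the dual atoms is Boolean of rank equal to the number of dual atoms. No gap.
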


On the way to proving Theorem~\ref{T:main},
we describe the prime ideals 
of a slim rectangular lattice $L$,
following up an observation in \cite{gG19}.
We shall also discuss variants of Theorem~\ref{T:main}
for rectangular lattices, PS lattices, and SPS lattices.

\subsection{Notation}\label{S:Notation}
%Section~\ref{S:Notation}

For the basic concepts and notation,
see my books \cite{LTF} and \cite{CFL2}.

\subsection{Outline}\label{S:Outline}
%Section~\ref{S:Outline}

We recall some easy facts about slim rectangular lattices
in Section~\ref{S:Background}
as well as we state the Structure Theorem and the Swing Lemma.
 
In Section~\ref{S:preliminary}, 
we prove some preliminary results
on slim rectangular lattices.
We~describe the prime ideals of a
slim rectangular lattice in Section~\ref{S:Prime}.
We~investigate in Section~\ref{S:structure}
how adjacent congruence classes interface.
A prime ideal $P$ of a lattice~$L$ is naturally associated
with a congruence~$\bgp(P)$, which we call a prime congruence.
In Section~\ref{S:Primecong} we prove
that a dual atom in $\Con L$ 
of a slim rectangular lattice $L$ 
is a~prime congruence. 
The main result of this paper follows.

\subsection*{Acknowledgement}
The author thanks the referee for many valuable comments.

\section{Background}\label{S:Background}
%Section~\ref{S:Background}

\subsection{Some known results}\label{S:known}%Section~\ref{S:known}

We will use the two results in the next lemma,
implicitly or explicitly.
 
\begin{lemma}\label{L:known}%Lemma~\ref{L:known}
Let $L$ be an SPS lattice. Then the following statements hold.
\begin{enumeratei}
\item An element of $L$ has at most two covers.
\item Let $x \in L$ 
cover three distinct elements $u$, $v$, and $w$.
Then the set $\set{u,v,w}$ gene\-rates an $\SfS 7$ sublattice
\lp see Figure~\ref{F:N7}\rp.
\end{enumeratei}
\end{lemma}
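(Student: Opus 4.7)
The plan is to prove (ii) first by an explicit construction of the $\SfS 7$ sublattice, then derive (i) by a case analysis in the upper-cover direction. For (ii), assume $x$ covers three distinct elements $u, v, w$. Since $u \prec x$ and $u \vee v$ lies strictly above $u$ inside $[0, x]$, we get $u \vee v = x$, and symmetrically all three pairwise joins equal $x$. By the cover-square property of planar semimodular lattices, the meets $p = u \wedge v$, $q = v \wedge w$, and $r = u \wedge w$ satisfy $p \prec u, v$, $q \prec v, w$, and $r \prec u, w$. Using planarity, arrange $u, v, w$ left-to-right with $v$ in the middle; the standard property that any element below two non-adjacent lower covers is also below the intermediate one gives $r \leq v$, hence $r \leq p$ and $r \leq q$. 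Slimness excludes $p = q$: this hypothesis would force $p \leq w$, hence $p = r$, so $\{r, u, v, w, x\}$ would be $\SM3$, a contradiction. The same reasoning gives $r < p$ and $r < q$ strictly, so the seven elements $\{r, p, q, u, v, w, x\}$ are distinct; a routine check (for example, $p \vee q = v$, $u \vee q = x$, $p \wedge q = r$) confirms the set is closed under meets and joins and its internal cover relations match the Hasse diagram of $\SfS 7$.

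For (i), suppose for contradiction that $x$ has three upper covers $y_1, y_2, y_3$. Each pairwise meet $y_i \wedge y_j$ equals $x$, and semimodularity makes each join $y_i \vee y_j$ cover both $y_i$ and $y_j$. Three cases arise. If all three pairwise joins coincide at some $z$, then $\{x, y_1, y_2, y_3, z\}$ is $\SM3$, contradicting slimness. If exactly two coincide---say $y_1 \vee y_2 = y_1 \vee y_3 = z$ and $y_2 \vee y_3 = w \neq z$---then $y_2, y_3 \leq z$ forces $w \leq z$, so $w < z$, and the relation $y_3 \prec w$ from semimodularity gives an element strictly between $y_3$ and $z$, contradicting $y_3 \prec z$. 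If all three pairwise joins are distinct, the sublattice generated by $\{y_1, y_2, y_3\}$ is isomorphic to $\SB 3$ with every internal cover relation preserved in $L$; since $\SB 3$ is not a planar poset, this cover-preserving embedding contradicts the planarity of $L$.

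The hardest step is the planarity argument used in (ii) to obtain $r \leq v$, which appeals to the geometric planar embedding of $L$ rather than a purely order-theoretic property. The analogous delicate point in (i) is verifying that the $\SB 3$ sublattice arising in the third case is cover-preserving in $L$, so that the non-planarity of $\SB 3$ as a poset transfers to the ambient diagram; this requires a careful invocation of semimodularity at each covering step.
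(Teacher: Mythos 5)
The paper offers no proof of this lemma at all---it only points to \cite{gG15} and \cite{CG16}---so there is nothing internal to compare your argument against; it has to stand on its own, and it has two genuine problems. In part (ii), you invoke the cover-square ($4$-cell) property for all three pairs of lower covers, asserting in particular that $r = u \mm w$ is covered by $u$ and by $w$. That is false: the $4$-cell property applies only to \emph{adjacent} lower covers in the planar diagram, and indeed in $\SfS 7$ itself the meet of the two outer lower covers of the top element lies two levels below them. Your own later (correct) conclusion $r < p < u$ contradicts $r \prec u$, so as written the proof asserts inconsistent facts. The slip is repairable, since the claim is never used afterwards: apply the $4$-cell property only to the adjacent pairs $(u,v)$ and $(v,w)$, and then to the pair $p, q \prec v$ to get $r = p \mm q \prec p, q$; with that correction and a full closure check the construction of the $\SfS 7$ sublattice goes through.

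In part (i), the third case is not established. You assert that when the three pairwise joins are distinct, $\set{y_1, y_2, y_3}$ generates a cover-preserving $\SB 3$, but the meet identities this requires, such as $(y_1 \jj y_2) \mm (y_1 \jj y_3) = y_1$, do not follow from anything you have shown, nor do you argue that the top of the would-be cube covers the elements $y_i \jj y_j$. The case is in fact vacuous, and seeing why collapses all of (i) into one step: taking $y_2$ to be the middle upper cover in the left-to-right order, the dual of the planarity property you already used in (ii) gives $y_2 \leq y_1 \jj y_3$; by semimodularity each $y_i \jj y_j$ covers $y_i$ and $y_j$, so $y_1 \prec y_1 \jj y_2 \leq y_1 \jj y_3$ forces $y_1 \jj y_2 = y_1 \jj y_3$, and likewise $y_2 \jj y_3 = y_1 \jj y_3$. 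All three joins coincide, and you land in your first case, where $\set{x, y_1, y_2, y_3, y_1 \jj y_3}$ is an $\SM3$ contradicting slimness. Your cases 1 and 2 are correct as written; the proof should simply show that only case 1 can occur.
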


See my paper \cite{gG15} and 
G. Cz\'edli and G. Gr\"atzer \cite{CG16}
for some proofs and references.

\begin{figure}[htb]
\centerline{\includegraphics{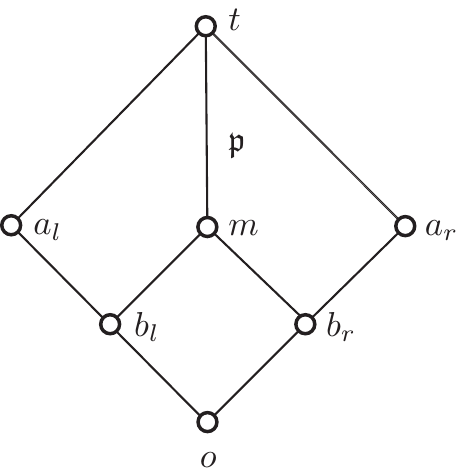}}
\caption{The lattice $\SN7$}
\label{F:N7}
%Figure~\ref{F:N7}
\end{figure}

As introduced in O. Ore \cite{oO43}, 
see also S. MacLane \cite{sM43},
a \emph{cell} $A$ in a planar lattice 
consists of two chains $C$ 
\tup{(}with zero $0_C$ and unit $1_C$\tup{)} 
and $D$ \tup{(}with zero $0_D$ and unit $1_D$\tup{)} 
such that the following conditions hold:
\begin{enumeratei}
\item $0_C  =  0_D$ and $1_C  =  1_D$;
\item $C$ and $D$ are maximal 
in $[0_C, 1_C] = [0_D, 1_D]$;
\item every $x \in C-\set{0_C, 1_C}$ 
is to the left of every $y \in D - \set{0_D, 1_D}$;
\item there are no elements inside the region 
bounded by $C$ and $D$.
\end{enumeratei}

A \emph{$4$-cell} is a cell with $|C| = |D| = 3$. 
A \emph{$4$-cell lattice} is a lattice 
in which all cells are $4$-cells.

For the following observation see G. Gr\"atzer and E. Knapp~\cite[Section 4]{GKn08}.

\begin{lemma}\label{L:known2}%Lemma~\ref{L:known2}
A PS lattice is a $4$-cell lattice.
\end{lemma}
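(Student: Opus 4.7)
The plan is to apply semimodularity to the atoms $c_1$ and $d_1$ of the cell. Write $C : 0_C = c_0 \prec c_1 \prec \cdots \prec c_n = 1_C$ and $D : 0_D = d_0 \prec d_1 \prec \cdots \prec d_m = 1_D$; condition~(iii) forces $c_1 \neq d_1$, hence $c_1 \wedge d_1 = 0_C$, because $c_1$ and $d_1$ each cover $0_C$.

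First, I would use semimodularity: from $c_1 \wedge d_1 = 0_C \prec c_1$ we get $d_1 \prec c_1 \vee d_1$, and symmetrically $c_1 \prec c_1 \vee d_1$, so $c_1 \vee d_1$ is a common upper cover of $c_1$ and $d_1$ lying in $[0_C, 1_C]$.

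The crucial step is to show $c_1 \vee d_1 = 1_C$. In the planar embedding, $c_1 \vee d_1$ sits in the region of the plane bounded by $C$ on the left and $D$ on the right (being drawn directly above both $c_1$ and $d_1$), so by~(iv) it has to lie on one of the boundary chains; say $c_1 \vee d_1 = c_2$, so that $c_2$ covers both $c_1$ and $d_1$. But then the Hasse-diagram edge $d_1 \prec c_2$ runs from the right boundary across the interior of the cell to the left boundary, contradicting the cell being the closed interior-free region described by~(iv) (understood in the Ore--MacLane \cite{oO43,sM43} sense, where a cell is a face of the planar Hasse diagram). The case $c_1 \vee d_1 = d_2$ is symmetric. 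Hence $c_1 \vee d_1 = 1_C$.

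Finally, from $1_C = c_1 \vee d_1 \succ c_1$ in the chain $C$ we see $c_2 = 1_C$, giving $n = 2$; likewise $m = 2$. Hence $|C| = |D| = 3$, i.e., $A$ is a $4$-cell. The main obstacle is the planar argument ruling out $c_1 \vee d_1 = c_2 < 1_C$. I would formalize this by observing that such an edge would split the cell region into two sub-regions, contradicting the face interpretation of~(iv); if instead one prefers a purely order-theoretic contradiction, the chain $\{0_C, d_1, c_2, c_3, \ldots, c_n\}$ is a maximal chain in $[0_C, 1_C]$ whose interior is neither entirely to the left of $D$ nor entirely to the right of $C$, which conflicts with the planar left--right ordering backing~(iii).
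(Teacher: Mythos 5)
The paper does not actually prove this lemma---it only cites G.~Gr\"atzer and E.~Knapp \cite[Section~4]{GKn08} (the note whose sole purpose is to repair the proof of this very statement from \cite{GKn07}). Your argument is essentially the standard one from that source: semimodularity applied to $c_1\wedge d_1=0_C\prec c_1,d_1$ produces a common cover $c_1\vee d_1$ of the two atoms of the cell, and planarity forces $c_1\vee d_1=1_C$, whence $|C|=|D|=3$ by maximality of the chains. The skeleton is correct, and the final step ($c_1\prec 1_C$ plus maximality of $C$ gives $n=2$) is clean.

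Two places deserve tightening. First, ``$c_1\vee d_1$ sits in the region bounded by $C$ and $D$, being drawn directly above both'' is not a reason; the actual reason is that the edges $c_1\prec c_1\vee d_1$ and $d_1\prec c_1\vee d_1$ start on opposite boundary chains of the cell and, in a planar diagram, cannot cross $C$ or $D$, so their common endpoint cannot escape the closed region unless it is $1_C$; only then does condition~(iv) place $c_1\vee d_1$ on $C\cup D$. Second, the case $c_1\vee d_1=c_2<1_C$, which you rightly flag as the main obstacle, has a much shorter order-theoretic resolution than either of the two you sketch: it would make $d_1\le c_2$ with $d_1\in D-\{0_D,1_D\}$ and $c_2\in C-\{0_C,1_C\}$ comparable, whereas condition~(iii) asserts that $c_2$ is \emph{to the left of} $d_1$, a relation that (in the Kelly--Rival sense \cite{KR75}) holds only between incomparable elements. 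This avoids any appeal to edges splitting faces and to the stronger ``face'' reading of condition~(iv), which, as you note, is not literally what~(iv) says. (A pedantic point: one should also observe that $|C|=2$ would force $0_C\prec 1_C$ and hence $C=D$, so $c_1$ and $d_1$ do exist.)
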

 
The following statement, 
see G. Gr\"atzer and E. Knapp~\cite[Lemma 4]{GKn09},
plays an important role.

\begin{lemma}\label{L:known3}%Lemma~\ref{L:known3}
In a slim rectangular lattice, 
the bottom boundaries are ideals 
and the upper boundaries are filters.
\end{lemma}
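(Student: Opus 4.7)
The plan is to show that the lower-left boundary chain $C_l$ from $0$ to $c_l$ coincides with the principal ideal $\downarrow c_l$, and, symmetrically, that the lower-right boundary chain $C_r$ equals $\downarrow c_r$. Since a principal down-set is an ideal, this proves the ideal half of the lemma. For the filter half, I would run the dual argument on the intervals $[c_l,1]$ and $[c_r,1]$, exploiting that the $4$-cell property of Lemma~\ref{L:known2} is self-dual in its local form.

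The inclusion $C_l \subseteq \downarrow c_l$ is immediate, so the content is the reverse inclusion. Fix $x \le c_l$; the planar embedding places $x$ on the left boundary, on the right boundary, or in the interior. If $x$ lies on the lower-right boundary, then $x \le c_r$, so $x \le c_l \wedge c_r = 0$ by rectangularity, yielding $x = 0 \in C_l$. If $x$ lies on the upper-right boundary, then $x \ge c_r$, which together with $x \le c_l$ forces $c_r \le c_l$ and hence $c_r = c_l \wedge c_r = 0$, a degenerate case we may exclude. If $x$ is already on the left boundary, then since $C_l$ is the initial segment of the left boundary below $c_l$, we have $x \in C_l$.

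The main obstacle is the interior case. Here I would use Lemma~\ref{L:known2} together with Lemma~\ref{L:known}(i) to execute a stepwise ``leftward move'' through adjacent $4$-cells. If $y \le c_l$ is an interior element of the diagram and $y'$ denotes the leftward neighbor of $y$ in a bordering $4$-cell (so $\{y, y', y \wedge y', y \vee y'\}$ forms a covering square), then the semimodular $4$-cell shape and the bound $y \vee y' \le c_l$ force $y' \le c_l$. Iterating this leftward step from $x$, and appealing to Lemma~\ref{L:known}(i) to ensure the process stays well-defined and terminates on the left boundary, produces a chain of elements of $\downarrow c_l$ from $x$ to a point of $C_l$, placing $x$ itself on $C_l$.

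For the upper boundaries, the same argument applied inside the interval $[c_l,1]$ (a slim planar semimodular lattice with $c_l$ as its zero) shows that $[c_l,1]$ equals the upper-left boundary chain from $c_l$ to $1$, making this chain the principal filter $\uparrow c_l$; the case of $c_r$ is symmetric under the left-right swap. Thus the upper boundaries are filters, completing the lemma.
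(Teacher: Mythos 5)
First, a point of reference: the paper does not prove this lemma at all --- it is quoted from G.~Gr\"atzer and E.~Knapp \cite{GKn09} (Lemma~4) --- so there is no in-paper argument to compare yours against; I am judging the proposal on its own. Your reduction of the ideal statement to the equality $C_l = \id{c_l}$ is sound (a down-closed chain is automatically an ideal), and the boundary cases are handled correctly (incidentally, the ``degenerate case'' $c_r = 0$ need not be excluded by fiat: $0 = c_l \wedge c_r$ is meet-reducible, whereas $c_r$ is doubly irreducible, so $c_r \neq 0$ automatically).

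The genuine gap is the interior case, which is the entire content of the lemma. Your leftward step asserts that ``the semimodular $4$-cell shape and the bound $y \vee y' \leq c_l$ force $y' \leq c_l$.'' But $y \vee y' \leq c_l$ is not a hypothesis available to you; given $y \leq c_l$, it is \emph{equivalent} to the desired conclusion $y' \leq c_l$, so the step is circular. Neither semimodularity nor Lemma~\ref{L:known2} supplies it: there is no a priori reason that the top of a $4$-cell adjacent to an element of $\id{c_l}$ stays in $\id{c_l}$. Moreover, even granting the propagation, the final inference --- that a sequence of $4$-cell neighbours in $\id{c_l}$ connecting the interior element $x$ to the left boundary ``places $x$ itself on $C_l$'' --- is a non sequitur: such a walk ends at a boundary element, but $x$ remains interior. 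What is actually needed is to show that no interior element lies below $c_l$ at all, for instance by an induction along a Cz\'edli--Schmidt sequence (the method the paper uses for its other boundary lemmas, such as Lemmas~\ref{L:corners} and~\ref{L:prime1}), or simply by citing \cite{GKn09}. Finally, the filter half cannot be obtained by ``the dual argument,'' since semimodularity is not self-dual; that part needs its own justification as well.
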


\begin{corollary}\label{C:rightboundary}
%Corollary~\ref{C:rightboundary}
Let $L$ be a slim rectangular lattice. 
Then for every $x \in L$, the element~$x \jj c_r$ 
is in the upper right boundary of $L$, 
and symmetrically.
\end{corollary}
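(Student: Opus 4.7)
The plan is to read the corollary off Lemma~\ref{L:known3} almost immediately, with the only real content being the identification of the upper right boundary with the principal filter $[c_r,1]$.

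First I would argue that the upper right boundary, as a subset of $L$, coincides with the interval $[c_r,1]$. By the definition of a rectangular lattice, $c_r$ is the unique doubly-irreducible element of the right boundary chain, so it splits that chain into a lower portion (from $0$ to $c_r$) and an upper portion (from $c_r$ to $1$); the latter is the upper right boundary and has $c_r$ as its minimum element. By Lemma~\ref{L:known3}, the upper right boundary is a filter of $L$. Since $L$ is finite, this filter is principal, and its generator must be its minimum element $c_r$, giving upper right boundary $= {\uparrow}c_r = [c_r,1]$.

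With this identification in hand, the corollary is a one-liner. For any $x \in L$, the join $x \jj c_r$ satisfies $x \jj c_r \geq c_r$, so $x \jj c_r \in [c_r,1]$, i.e., $x \jj c_r$ lies in the upper right boundary. The symmetric statement (with $c_l$ and the upper left boundary) follows by the same argument, since Lemma~\ref{L:known3} treats both upper boundaries symmetrically. I do not expect a genuine obstacle; the only thing one must check carefully is that the planar notion of ``upper right boundary'' really does match $[c_r,1]$ as a lattice-theoretic set, and this is exactly what Lemma~\ref{L:known3} delivers.
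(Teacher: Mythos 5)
Your proof is correct and follows essentially the same route as the paper: identify the upper right boundary with the filter $\fil{c_r}$ via Lemma~\ref{L:known3}, then observe $x \jj c_r \geq c_r$. You merely spell out in more detail why that filter is the principal filter generated by $c_r$, which the paper takes as immediate.
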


\begin{proof}
Indeed, by Lemma~\ref{L:known3}, 
the upper right boundary of $L$ 
is the filter generated by~$c_r$.
Since $x \jj c_r \in \fil {c_r}$,
it follows that $x \jj c_r$ is in the upper right boundary.
\end{proof}

\begin{figure}[ht] 
%Figure~\ref{F:forknotation}
\centerline{\includegraphics{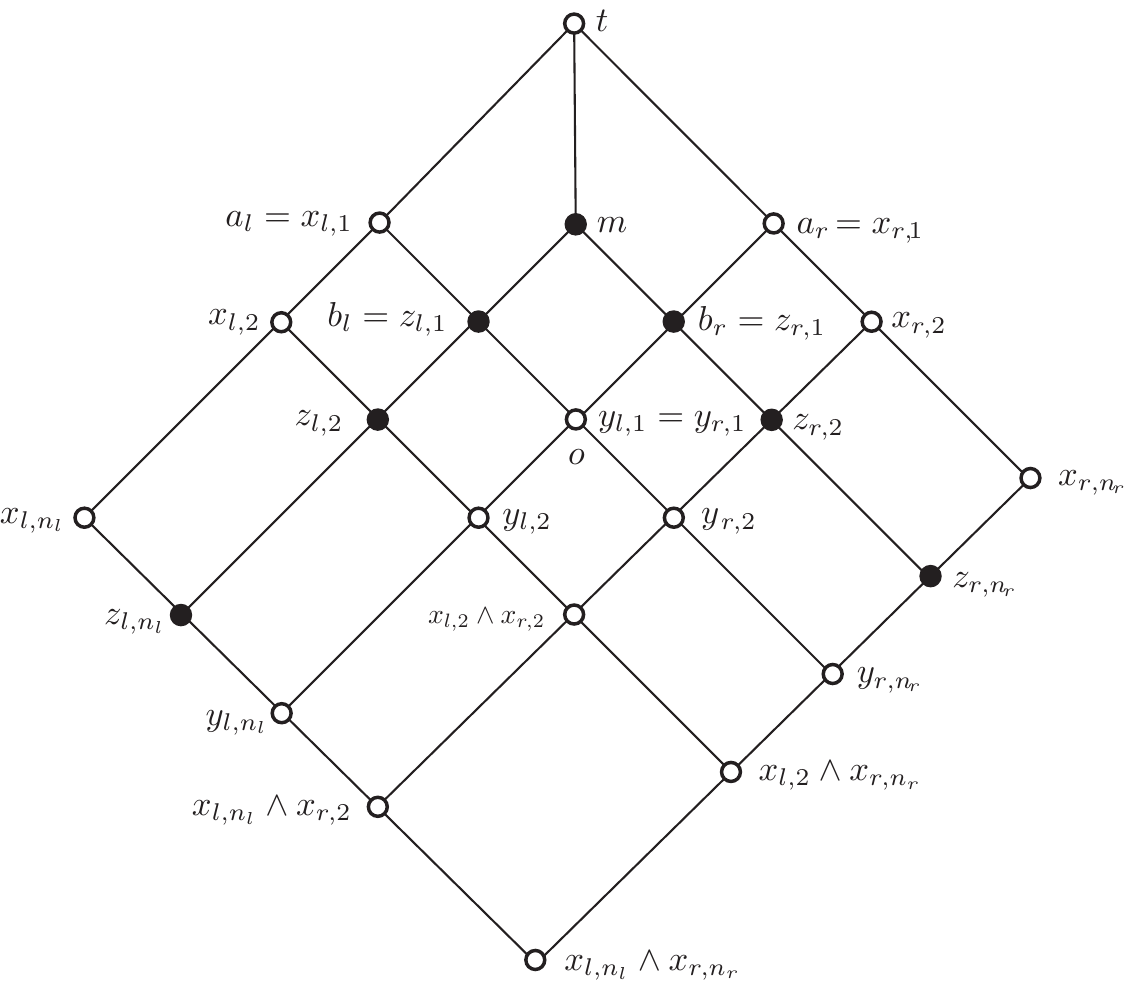}}
\caption{Notation for the fork insertion}\label{F:forknotation}
\end{figure}

\subsection{The Structure Theorem}\label{S:Structure}%Section~\ref{S:Structure}

Let $L$ be a slim rectangular lattice.
A \emph{Cz\'edli-Schmidt Sequence} for $L$
is a~sequence of slim rectangular lattices and 
a sequence of covering squares:
\begin{equation}\label{E:FES}%\eqref{E:FES}
   \begin{split}
       &D = L_1, L_2, \dots, L_s = L,\\ 
       &S^1 = \set{o^1, a_l^1, a_r^1, t^1},
       S^2 = \set{o^2, a_l^2, a_r^2, t^2}, \dots,
       S^{s-1} = \set{o^{s-1}, a_l^{s-1}, a_r^{s-1}, t^{s-1}},
   \end{split}
\end{equation}
where $S^i$ is a covering square in $L_{i}$ and
we obtain $L_{i+1}$ from $L_{i}$ 
by~inserting a fork at $S^{i}$ 
(in formula, $L_{i+1} = L_{i}[S^{i}]$)
for $i = 1, \dots, s-1$. 
 
For detailed descriptions of the fork extension, see
G.~Cz\'edli and E.\,T. Schmidt~\cite{CS12}, 
G. Gr\"atzer~\cite{CFL2}, and other papers in the references.

We use the standard notation
for fork insertions, see Figure~\ref{F:forknotation}
(where the black filled elements represent 
the inserted elements). 

The following result is G.~Cz\'edli 
and E.\,T. Schmidt~\cite[Lemma 22]{CS12}.

\begin{theoremn}[The Structure Theorem for 
Slim Rectangular Lattices]\label{T:ES}%Theorem~\ref{T:ES}
Let $L$ be a \emph{slim rectangular lattice}.
Then there is a grid $D = \SC p \times \SC q$,
where $p,q \geq 2$, and a~Cz\'edli-Schmidt Sequence
from $D$ to $L$.
\end{theoremn}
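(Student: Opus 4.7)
The plan is to prove the theorem by induction on $|L|$. The base case handles grids directly: if $L = \SC p \times \SC q$ is already a grid, the sequence is simply $L_1 = L$ with no fork insertions. For the inductive step, the aim is to identify a ``latest'' inserted fork inside $L$, remove it to obtain a strictly smaller slim rectangular lattice $L'$, and invoke the inductive hypothesis on $L'$.

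First I would characterize the \emph{fork footprint} inside a slim rectangular lattice. Recall that inserting a fork at a $4$-cell $S = \set{o, a_l, a_r, t}$ of $L'$ replaces $S$ with a fixed configuration: new elements are inserted on the edges $o \prec a_l$ and $o \prec a_r$, together with a new element on the diagonal, producing a characteristic pattern of $4$-cells covered by $t$. Using Lemma~\ref{L:known}, Lemma~\ref{L:known2}, and slimness (no $\SM 3$ sublattice), one can recognize this pattern purely from the covering structure of $L$: an element $t \in L$ is a fork top if its lower covers $a_l, a_r$ are distinct, share a common lower cover produced by the diagonal element, and the local $4$-cells around $t$ match the fork picture (with no $\SfS 7$ interference, by Lemma~\ref{L:known}(ii)). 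The converse operation---deleting the new elements and identifying the two edges back into the original edges of $S$---then yields a well-defined smaller lattice $L'$ with $L = L'[S]$.

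Next I would show that any non-grid slim rectangular lattice $L$ contains at least one removable fork footprint. The natural candidate is a \emph{topmost} or \emph{outermost} fork footprint in the planar diagram: among all $4$-cells of $L$ that deviate from a sub-grid structure, choose one maximal with respect to the covering relation (or, equivalently, farthest from $0$ along a suitable chain). Planarity, semimodularity, Corollary~\ref{C:rightboundary}, and Lemma~\ref{L:known3} (the boundary chains are ideals/filters) together force the chosen $4$-cell and its surrounding covers to form a fork footprint rather than some other configuration. Removing the footprint yields a slim rectangular lattice $L'$ with $|L'| < |L|$, to which the inductive hypothesis applies, giving a Cz\'edli-Schmidt Sequence $D = L_1, \dots, L_{s-1} = L'$; appending $S^{s-1} := S$ and setting $L_s = L'[S^{s-1}] = L$ completes the sequence for $L$.

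The main obstacle is the geometric/combinatorial step of guaranteeing that a non-grid slim rectangular lattice must contain a removable fork footprint \emph{and} that its removal preserves slimness, semimodularity, planarity, and the rectangular property (in particular, that $c_l$ and $c_r$ remain the unique doubly-irreducible elements of the left and right boundary chains). This requires a careful local analysis of adjacent $4$-cells, ruling out configurations that would create an $\SM 3$ or an $\SfS 7$ after collapsing, and verifying that the extremal choice lies in the interior rather than on the boundaries---so that the boundary chains are left unchanged by the removal. Once this local analysis is in place, the induction closes and the base case of a pure grid finishes the argument.
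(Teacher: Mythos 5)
The paper does not prove this statement at all: it is quoted verbatim from G.~Cz\'edli and E.\,T. Schmidt \cite{CS12} (their Lemma~22), so there is no in-paper proof to compare against. Your overall strategy---induction on $|L|$, locate a removable fork, delete it, and apply the inductive hypothesis to the smaller lattice---is in fact the strategy of the cited source, so the plan is sound in outline. As written, however, the proposal has both a genuine gap and a factual error about the construction it relies on.

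The gap: the entire content of the theorem is the step you defer to ``a careful local analysis,'' namely that every slim rectangular lattice that is not a grid contains a fork whose removal again yields a slim rectangular lattice, with $L = L'[S]$. You describe what would need to be checked but carry out none of it; choosing a ``topmost'' deviant $4$-cell and asserting that planarity and semimodularity ``force'' it to be a fork footprint is precisely the assertion that has to be proved. The error: a fork insertion is not a local modification of a single $4$-cell. As the paper's own notation makes explicit (the new elements form the set $[z_{l,n_l},m]\cup[z_{r,n_r},m]$ in the proof of Lemma~\ref{L:four}, and Lemma~\ref{L:xx} records that each insertion lengthens both lower boundary chains by one), the inserted elements form two chains descending from the new middle element $m$ all the way down to the lower left and lower right boundary chains. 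Consequently a fork footprint is not confined to the covers of one element $t$, and its removal necessarily changes the lower boundary chains; your requirement that ``the boundary chains are left unchanged by the removal'' is false for the lower boundaries---only the upper boundaries are preserved (Corollary~\ref{C:upper}). A correct recognition-and-removal argument must identify and track these descending chains (in \cite{CS12} this is done via the analysis of the $\SfS 7$ sublattice containing $m$ and of trajectories), and that is exactly the part your sketch leaves open.
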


Note that the integer $s$ in \eqref{E:FES} is an invariant.

We call $D$ the \emph{grid} of $L$; 
it is isomorphic to a sublattice of $L$.
\subsection{The Swing Lemma}\label{S:Swing}%Section~\ref{S:Swing}
For the prime intervals $\fp, \fq$ of an SPS lattice $L$, 
we define a binary relation:
$\fp$~\emph{swings} to $\fq$, 
%in formula, $\fp \swing \fq$,
if $1_\fp = 1_\fq$, 
this element covers at least three elements,
and $0_\fq$ is neither the left-most nor the right-most element
covered by~$1_\fp = 1_\fq$, see Figure~\ref{F:n7+}. 

\begin{figure}[thb]%Figure~\ref{F:n7+}
\centerline{\includegraphics{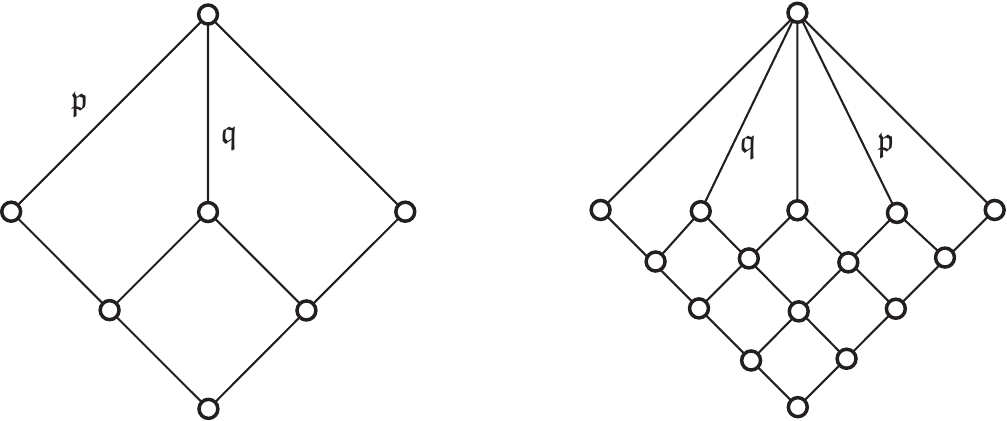}} 
\caption{Swings, $\fp \protect\swing \fq$}\label{F:n7+}
\end{figure}

The following result is from my paper \cite{gG15}.

\begin{lemma}[Swing Lemma]\label{L:SPSproj}
%Lemma~\ref{L:SPSproj}
Let $L$ be an SPS lattice 
and let $\fp$ and $\fq$ be distinct prime intervals in $L$. 
If $\fq$ is collapsed by $\con{\fp}$,
then there exists a prime interval~$\fr$ 
and sequence of pairwise distinct prime intervals
\begin{equation}\label{Eq:sequence}%\eqref{Eq:sequence}
\fr = \fr_0, \fr_1, \dots, \fr_n = \fq
\end{equation}
such that $\fp$ is up perspective to $\fr$, and 
$\fr_i$ is down perspective to or swings to $\fr_{i+1}$
for $i = 0, \dots, n-1$. 
In addition, the sequence \eqref{Eq:sequence} also satisfies 
\begin{equation}\label{E:geq}%\eqref{E:geq}
   1_{\fr_0} \geq 1_{\fr_1} \geq \dots 
      \geq 1_{\fr_n} = 1_{\fq}.
\end{equation}
\end{lemma}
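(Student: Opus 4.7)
The plan is to start from the Dilworth-style characterization of congruence generation: $\fq$ is collapsed by $\con{\fp}$ if and only if there is a sequence $\fp = \fp_0, \fp_1, \dots, \fp_m = \fq$ of prime intervals with consecutive terms perspective (up or down). Among all such sequences I would pick one with $m$ minimal. The claim is that this minimal sequence, possibly trimmed at its front, realizes the conclusion: take $\fr = \fr_0$ to be $\fp_1$ if the first step is an up-perspectivity, and $\fr_0 = \fp$ otherwise (using the trivial $\fp \nearrow \fp$), with the remaining terms $\fr_1, \fr_2, \dots$ inherited from the $\fp_i$.

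The heart of the argument is showing that after the first step, no further up-perspectivity occurs. Suppose a valley $\fp_i \searrow \fp_{i+1} \nearrow \fp_{i+2}$ arises. Since $L$ is a $4$-cell lattice by Lemma~\ref{L:known2}, both perspectivities concentrate at $1_{\fp_{i+1}}$. If $1_{\fp_{i+1}}$ has only two covers, the valley redraws as a $4$-cell parallelogram, yielding an up-then-down shortcut with strictly smaller top; combined with the preceding steps, this contradicts the minimality of $m$. If $1_{\fp_{i+1}}$ has three covers, Lemma~\ref{L:known}(ii) exhibits them as the coatoms of an $\SfS 7$ sublattice, and the triple $\fp_i, \fp_{i+1}, \fp_{i+2}$ collapses into a single swing from $\fp_i$ directly to $\fp_{i+2}$, again shortening the sequence.

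Every remaining step that preserves the top is then forced to be a swing, by a local analysis of the same kind: if $1_{\fr_i} = 1_{\fr_{i+1}}$ and the common top $t$ has only two covers, Lemma~\ref{L:known}(i) makes the transition a down-perspectivity in disguise, absorbed by minimality; if $t$ has three covers, the associated $\SfS 7$ witnesses the swing, with $0_{\fr_{i+1}}$ neither leftmost nor rightmost among the covers of $t$, matching Figure~\ref{F:n7+}. The chain $1_{\fr_0} \ge 1_{\fr_1} \ge \dots \ge 1_{\fq}$ is then immediate, since down-perspectivities strictly decrease tops while swings preserve them; pairwise distinctness follows from minimality, as any repetition admits excision.

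The main obstacle is the valley-reduction step: a careful case analysis exploiting planarity, semimodularity, and slimness simultaneously is required to show that every $\searrow \nearrow$ configuration in an SPS lattice either commutes via a $4$-cell parallelogram or concentrates at an element with three covers and thus an $\SfS 7$, from which a swing emerges. This is the single geometric fact into which all the hypotheses on $L$ feed, and it is where the bulk of the work of the Swing Lemma lies.
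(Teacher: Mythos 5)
First, note that the paper does not prove the Swing Lemma at all: it is quoted verbatim from \cite{gG15} (``The following result is from my paper \dots''), so there is no in-paper argument to compare yours against. Judged on its own terms, your proposal has a fatal gap at the very first step. You start from the claim that $\fq$ is collapsed by $\con{\fp}$ if and only if there is a sequence of \emph{prime} intervals from $\fp$ to $\fq$ in which consecutive terms are genuinely \emph{perspective}. This is false, even in SPS lattices, and the lattice $\SfS 7$ itself is a counterexample. Label its elements so that $1$ covers $a$, $b$, $c$ with $b$ the middle element, $a \wedge b = d$, $b \wedge c = e$, $d \wedge e = 0$. Then $\con{[a,1]}$ collapses $[b,1]$ (the candidate partition $\set{1,a}$, $\set{b,d}$, $\set{0,c,e}$ violates the substitution property, since $b \vee c = 1$ while $d \vee e = b$), yet no chain of perspectivities joins $[a,1]$ to $[b,1]$: two distinct prime intervals sharing a top are never perspective, and a direct check shows the perspectivity class of $[b,1]$ is $\set{[b,1],[d,a],[e,c]}$, which does not meet the perspectivity class of $[a,1]$. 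The swing is precisely the step that \emph{cannot} be realized by, or decomposed into, perspectivities; your plan of ``discovering'' swings as degenerate valley configurations inside a minimal perspectivity sequence therefore cannot get off the ground, because the required sequence need not exist.

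The correct starting point (and the one used in \cite{gG15}) is the Prime-Projectivity Lemma: $\con{\fp} \geq \con{\fq}$ is witnessed by a sequence of prime intervals in which consecutive terms are \emph{prime-perspective}, a strictly weaker relation than perspectivity (it only requires, say, $1_{\fr_i} = 0_{\fr_i} \vee 1_{\fr_{i+1}}$ together with $0_{\fr_i} \wedge 1_{\fr_{i+1}} \leq 0_{\fr_{i+1}}$, not the full meet--join symmetry). The substance of the Swing Lemma is then the conversion, using planarity, semimodularity, slimness, and the $4$-cell property, of an arbitrary prime-perspectivity step into either a genuine down-perspectivity or a swing, and the consolidation of all up-steps into a single initial one. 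Your valley-elimination and top-monotonicity heuristics are in the right spirit for that second phase, but as written they presuppose the wrong normal form for the input sequence, so the argument as a whole does not constitute a proof.
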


The Swing Lemma is easy to visualize. 
Perspectivity up is ``climbing up'', 
perspectivity down is ``sliding''. 
So we get from $\fp$ to $\fq$ by climbing up once
and then alternating sliding and swinging.

\section{Some preliminary results
on slim rectangular lattices}\label{S:preliminary}%Section~\ref{S:preliminary}

In this section, we prove some elementary results
about slim rectangular lattices. 
Let $L$ be a slim rectangular lattice
with the Cz\'edli-Schmidt Sequence~\eqref{E:FES} and
with the grid $D = \SC p \times \SC q$.

Let $c_l$ and $c_r$ be the corners of $D$,
and let $c_l^i$ and $c_r^i$ be the corners of $L_i$
for $i = 1, \dots, s-1$. 

We prove the next two lemmas utilizing the
Cz\'edli-Schmidt Sequences.

\begin{lemma}\label{L:corners}%Lemma~\ref{L:corners}
$c_l = c_l^i$ and $c_r = c_r^i$ for $i = 1, \dots, s$.
\end{lemma}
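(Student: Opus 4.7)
The plan is to argue by induction on $i$. The base case $i = 1$ is immediate: $L_1 = D$ is the grid $\SC p \times \SC q$, and by definition the corners of $D$ are $c_l^1 = c_l$ and $c_r^1 = c_r$.

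For the inductive step, I would assume $c_l^i = c_l$ and $c_r^i = c_r$ and verify that these remain the corners of $L_{i+1} = L_i[S^i]$. The first observation I would use is that fork insertion at the covering square $S^i = \set{o^i, a_l^i, a_r^i, t^i}$ places all new elements into the interior of the planar region bounded by $S^i$; in particular, no new element lies on the left or right boundary of $L_{i+1}$. Hence the left and right boundary chains of $L_{i+1}$ coincide, as chains of elements, with those of $L_i$.

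Next, I would check that $c_l^i$ remains doubly-irreducible in $L_{i+1}$. If $c_l^i$ is not one of the four elements of $S^i$, then its cover relations are unchanged by the fork insertion and there is nothing to prove. If $c_l^i$ is a vertex of $S^i$, I would rule out $c_l^i = o^i$ (which has upper covers $a_l^i, a_r^i$ in $L_i$ and hence is not meet-irreducible) and, symmetrically, $c_l^i = t^i$; also $c_l^i = a_r^i$ is impossible, since the right vertex of $S^i$ cannot lie on the left boundary of the planar lattice $L_i$. The only remaining case is $c_l^i = a_l^i$, and here fork insertion replaces the covering $a_l^i \prec t^i$ by a two-step chain through a new coatom of $S^i$; thus in $L_{i+1}$ the element $a_l^i$ retains its unique lower cover $o^i$ and acquires exactly one new upper cover in place of $t^i$, so it remains doubly-irreducible. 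A symmetric argument handles $c_r^i$.

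Finally, $L_{i+1}$ is again slim rectangular by the Cz\'edli-Schmidt construction, so its left boundary chain contains a unique doubly-irreducible element; this must therefore be $c_l^i = c_l$, giving $c_l^{i+1} = c_l$, and similarly $c_r^{i+1} = c_r$, closing the induction. The main obstacle I anticipate is the bookkeeping in the case $c_l^i = a_l^i$ (and symmetrically $c_r^i = a_r^i$): I need the precise effect of fork insertion on the unique upper cover of $a_l^i$, as indicated in Figure~\ref{F:forknotation}, but once that picture is in hand the verification is routine.
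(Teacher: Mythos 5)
Your overall strategy---induct along the Cz\'edli--Schmidt sequence and check that the corner stays the unique doubly irreducible element of the boundary chain---is the same as the paper's, but your account of what a fork insertion does to the boundaries is wrong, and the errors are not cosmetic. The new elements are \emph{not} confined to the interior of the region bounded by $S^i$: the fork has two legs descending from $m$ toward the lower left and the lower right (see Figure~\ref{F:forknotation}), and each leg terminates by inserting one new element into the lower left, respectively lower right, boundary chain. This is exactly what Lemma~\ref{L:xx} records: each step of the sequence increases $\length[0,c_l]$ and $\length[0,c_r]$ by one. So your claim that the left and right boundary chains of $L_{i+1}$ coincide with those of $L_i$ is false, and with it the claim that a boundary element not lying in $S^i$ keeps its cover relations: the leg may subdivide the edge of the lower boundary chain immediately below $c_l$, replacing the unique lower cover of $c_l$ by a new element.

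Your treatment of the case $c_l^i=a_l^i$ is also upside down. Fork insertion does not subdivide $a_l^i\prec t^i$; after the insertion $t^i$ covers $a_l^i$, $m$, and $a_r^i$ (they form the top of the new $\SN7$), while $a_l^i$ acquires the new element $b_l$ as a lower cover, with $o^i\prec b_l\prec a_l^i$ when the edge $[o^i,a_l^i]$ lies on the lower boundary---which is the case precisely when $a_l^i=c_l^i$. So it is the \emph{lower} cover of $c_l$ that gets replaced, not the upper one; the conclusion that $c_l$ stays doubly irreducible survives, but for the opposite reason to the one you give. The paper's proof dispenses with the case analysis by isolating the one fact that matters: exactly one new element lands on each lower boundary chain, it is meet-reducible, and it lies below the corner, hence it cannot be doubly irreducible and the corner remains the unique doubly irreducible element of the boundary. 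You would need to replace your description of the fork with the correct one before your verification goes through.
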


\begin{proof}
By induction on $s$ as in \eqref{E:FES}.
By definition, $c_l = c_l^1$ and $c_r = c_r^1$.
Assume that the statement holds for~$s-1$.
We obtain $L_{s}$ from $L_{s-1}$ 
by adding a fork at $S^{s-1}$, see Figure~\ref{F:forknotation},
so there is only one new element on the left boundary, 
and it is a meet-reducible element below $c_l = c_l^{s-1}$.
Therefore, $c_l$ is the only doubly irreducible element
on the left boundary of $L_{s-1}$, and so $c_l = c_l^{s}$. 
Similarly, $c_r = c_r^s$. 
\end{proof}

\begin{corollary}\label{C:upper}%Corollary~\ref{C:upper}
Let $L$ be a slim rectangular lattice 
and let $S$ be a covering square in~$L$.
Then the upper left boundaries of $L$ and $L[S]$ 
are the same \lp and symmetrically\rp.
Therefore, the chains $\SC p$ and $\SC q$  
are isomorphic to $[c_l,1]$ and $[c_r,1]$, respectively.
\end{corollary}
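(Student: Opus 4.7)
The plan is to piggyback directly on Lemma~\ref{L:corners} and, more importantly, on its proof. That proof actually establishes a sharper statement which I would re-use verbatim: inserting a fork at a covering square of a slim rectangular lattice can add at most one new element to the left boundary, and any such element is \emph{strictly below} $c_l$; the analogous statement holds on the right.

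Granting this sharper fact, the first assertion is immediate. For a covering square $S$ of $L$, Lemma~\ref{L:corners} gives $c_l^{L[S]} = c_l$, and every new element of the left boundary of $L[S]$ sits below $c_l$; so the upper left boundary $[c_l,1]$ is undisturbed, and symmetrically for the right side. For the second assertion, I would apply Theorem~\ref{T:ES} to pick a Cz\'edli-Schmidt Sequence $D = L_1, \dots, L_s = L$ and iterate the first assertion across the steps $L_{i+1} = L_i[S^i]$, obtaining $[c_l,1]_L = [c_l,1]_D$ and $[c_r,1]_L = [c_r,1]_D$. In the grid $D = \SC p \times \SC q$ these two intervals are literally the two ``top'' edges, hence are the chain factors $\SC p$ and $\SC q$ (with the labeling determined by which factor sits on which side).

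The only point that requires genuine attention is the reason every new left-boundary element produced by a fork lies below $c_l$: the fork construction (Figure~\ref{F:forknotation}) only introduces meet-reducible elements on the boundary, whereas the upper left boundary, being a chain in which each non-top element has a unique cover, consists entirely of meet-irreducible elements. This meet-(ir)reducibility contrast is the entire substance of the argument, and it is already implicit in the proof of Lemma~\ref{L:corners}; I do not foresee any further obstacle.
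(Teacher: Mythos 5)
Your proposal is correct and follows essentially the same route as the paper, which states this as a corollary precisely because the proof of Lemma~\ref{L:corners} already shows that a fork insertion adds only one new boundary element on each side and that it lies (being meet-reducible) strictly below the corner, leaving $[c_l,1]$ and $[c_r,1]$ untouched. Iterating along a Cz\'edli-Schmidt Sequence to identify these chains with the factors of the grid is exactly the intended argument.
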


\begin{corollary}\label{C:base}%Corollary~\ref{C:base}
For a slim rectangular lattice $L$,
the grid is unique up to isomorphism.
\end{corollary}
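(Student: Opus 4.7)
The plan is to read off the corollary directly from Corollary~\ref{C:upper}. The point is that the corners $c_l$ and $c_r$ are intrinsic invariants of the slim rectangular lattice $L$ (they are defined as the unique doubly-irreducible elements on the left and right boundary chains), so the intervals $[c_l,1]$ and $[c_r,1]$ depend only on $L$, not on any particular Cz\'edli-Schmidt Sequence used to build $L$.

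Concretely, suppose that $L$ admits two Cz\'edli-Schmidt Sequences, starting from grids $D = \SC p \times \SC q$ and $D' = \SC{p'} \times \SC{q'}$, respectively. Applying Corollary~\ref{C:upper} to each sequence, we obtain
\[
   \SC p \cong [c_l,1] \cong \SC{p'}
   \quad\text{and}\quad
   \SC q \cong [c_r,1] \cong \SC{q'}.
\]
Since a finite chain is determined up to isomorphism by its length, this forces $p = p'$ and $q = q'$, and therefore $D \cong \SC p \times \SC q \cong \SC{p'} \times \SC{q'} \cong D'$.

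There is no real obstacle here: the work has been done in Lemma~\ref{L:corners} (the corners are preserved by fork insertions) and Corollary~\ref{C:upper} (the upper boundary chains are preserved). The only thing to verify is that the corners $c_l,c_r$ of $L$ in the definition of rectangularity coincide with the corners of the initial grid in any Cz\'edli-Schmidt Sequence, which is exactly the content of Lemma~\ref{L:corners} applied with $i = 1$. Hence the corollary is an immediate consequence of the two results just above it.
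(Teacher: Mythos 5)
Your argument is correct and is exactly the intended derivation: the paper states this as an immediate consequence of Corollary~\ref{C:upper} (with Lemma~\ref{L:corners} identifying the corners of the grid with those of $L$), which pins down $\SC p \cong [c_l,1]$ and $\SC q \cong [c_r,1]$ independently of the chosen Cz\'edli-Schmidt Sequence. Nothing is missing.
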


\begin{lemma}\label{L:xx}%Lemma~\ref{L:xx}
Let $L$ be a slim rectangular lattice. 
Then 
\begin{align}
  \length[0, c_l^s]  = \length[0, c_l] + s - 1,\\
  \length[0, c_r^s]  = \length[0_r, c_r] + s -1.
\end{align}
\end{lemma}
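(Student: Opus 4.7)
The plan is to induct on the length $s$ of the Cz\'edli-Schmidt Sequence~\eqref{E:FES}. For the base case $s = 1$, we have $L = L_1 = D$ and $c_l^1 = c_l$, so each equation reduces to a tautology (reading $0_r$ as~$0$ in the second).

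For the inductive step, we pass from $L_{s-1}$ to $L_{s} = L_{s-1}[S^{s-1}]$ by a single fork insertion at the covering square $S^{s-1}$. The key input is already established in the proof of Lemma~\ref{L:corners}: this fork insertion contributes exactly one new element on the lower-left boundary, and that element lies strictly below $c_l = c_l^{s-1} = c_l^s$ (symmetrically, one new element on the lower-right boundary strictly below~$c_r$). By Lemma~\ref{L:known3}, the lower-left boundary is an ideal of $L_s$ (and of $L_{s-1}$); its top element being $c_l$, this boundary coincides with the interval $[0, c_l]$. In particular, $[0, c_l^s]$ is a chain whose length counts its covers along the boundary. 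Since this chain gains exactly one new element in the fork insertion,
\[
   \length[0, c_l^{s}] = \length[0, c_l^{s-1}] + 1 = \length[0, c_l] + (s-1),
\]
using the induction hypothesis at the last step. The second equation follows by the symmetric argument with left and right exchanged.

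The main obstacle is not the induction itself, which is bookkeeping, but the black-boxed structural fact inherited from the proof of Lemma~\ref{L:corners}: that a fork inserted at an \emph{arbitrary} covering square $S^{s-1}$, however deep inside the lattice, propagates out to the two lower boundaries and contributes exactly one new boundary element on each side. This depends on the cascading description of the fork-insertion operation from~\cite{CS12}. Once that structural fact is in hand, and once we also know from Lemma~\ref{L:known3} that the lower boundary is an ideal (so that the boundary chain really is the whole interval $[0,c_l]$, not just a subchain), the inductive step is immediate.
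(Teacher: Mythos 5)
Your proof is correct and follows essentially the same route as the paper, whose entire argument is the one-line observation that each fork insertion adds exactly one element to each lower boundary chain; you merely make the induction and the appeal to Lemma~\ref{L:known3} (so that the lower boundary really is all of $[0,c_l]$) explicit. No gap.
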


\begin{proof}
Indeed, each step in \eqref{E:FES} adds an element 
to the lower boundary chains.
\end{proof}

It now follows that 
\begin{equation}
   \length[0, c_l] - \length[0, c_r]
   = \length[c_r, 1] - \length[c_l, 1].
\end{equation}
This immediately follows also from semimodularity.

\section{Prime ideals}\label{S:Prime}
%Section~\ref{S:Prime}

We describe the prime ideals of a
slim rectangular lattice in this section.

The two lemmas of this section are proved 
using the Cz\'edli-Schmidt Sequences.

Let $L$ be a planar semimodular lattice.
We call the element $m \in L$ a \emph{middle} element 
of $L$ if there is an $\SN7$ sublattice such that 
$m$ is the middle element of the $\SN7$ sublattice. 

\begin{lemma}\label{L:four}%Lemma~\ref{L:four}
Let $L$ be a slim rectangular lattice.
Let $a$ be an element of $L$. 
Then one of the following statements holds:
\begin{enumeratei}
\item the element $a$ is 
on the upper boundary of $L$;
\item the element $a$ is meet-reducible;
\item the element $a$ is a middle element.
\end{enumeratei}
\end{lemma}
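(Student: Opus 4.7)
The plan is to prove Lemma~\ref{L:four} by induction on the length $s$ of the Cz\'edli-Schmidt Sequence~\eqref{E:FES} supplied by the Structure Theorem.

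For the base case $s = 1$, the lattice $L$ is the grid $D = \SC p \times \SC q$. Every element $(i, j)$ with $i < p$ and $j < q$ is the meet of $(i+1, j)$ and $(i, j+1)$, so~(ii) holds; the remaining elements satisfy $i = p$ or $j = q$, placing them on the upper boundary and giving~(i). No appeal to~(iii) is needed at this stage.

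For the inductive step, I would assume the lemma for $L_{s-1}$ and obtain $L = L_s$ by a fork insertion at the covering square $S^{s-1} = \{o, a_l, a_r, t\}$, which, following Figure~\ref{F:forknotation}, contributes three new elements $a_l', a_r', m$ with $o \prec a_l', a_r'$, $a_l' \prec a_l, m$, $a_r' \prec a_r, m$, and $m \prec t$. If $a$ is new: when $a = m$, the element $t$ now covers the three distinct elements $a_l, m, a_r$, so Lemma~\ref{L:known}(ii) produces an $\SN7$ sublattice in which $m$ is the planar middle, yielding~(iii); when $a \in \{a_l', a_r'\}$, the element~$a$ has two distinct upper covers and~(ii) holds. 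If $a \in L_{s-1}$, I invoke the inductive hypothesis and verify that each alternative persists. Alternative~(i) persists by Corollary~\ref{C:upper}, which identifies the upper boundaries of $L_{s-1}$ and~$L_s$. Alternative~(ii) persists because the only change to upper covers affects the element~$o$, whose covers $a_l, a_r$ are simply replaced by $a_l', a_r'$, so no old element loses an upper cover. Alternative~(iii) persists because any $\SN7$ sublattice of $L_{s-1}$ witnessing~(iii) for~$a$ remains an $\SN7$ sublattice of~$L_s$.

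The hard part will be justifying the persistence in~(iii): one must verify that the lattice operations on any seven-element $\SN7$-subset of $L_{s-1}$ are unchanged when recomputed in $L_s$. This reduces to showing that no new element $a_l', a_r', m$ can be written as the join or meet of two old elements. Since all three new elements lie strictly inside the interval $[o, t]$, a direct inspection of their cover relations shows that any old element above $a_l'$ already exceeds $a_l$, and symmetrically for $a_r'$ and $m$; hence whenever a new element is an upper (or lower) bound of two old elements, a still larger (resp.\ smaller) old element is too, so the new element does not appear in the join (resp.\ meet). The planar ``middle'' status of~$a$ in the surviving $\SN7$ is preserved because the fork insertion respects the planar diagram.
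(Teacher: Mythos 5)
Your overall strategy---induction along the Cz\'edli-Schmidt sequence, classifying the newly inserted elements, and checking that the three alternatives persist for the old elements---is the same as the paper's, and your identification of $m$ as a middle element via Lemma~\ref{L:known}(ii) and of the remaining new elements as meet-reducible is exactly the intended argument (indeed, you are more careful than the paper, which does not discuss persistence for old elements at all, and whose own proof contains a slip in labeling $m$ as case~(ii) rather than~(iii)).

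There is, however, a genuine gap: you have misdescribed the fork insertion. Inserting a fork at $S^{s-1}=\set{o,a_l,a_r,t}$ does not add just three elements $a_l'$, $a_r'$, $m$; it adds $m$ together with two chains of new elements descending, cell by cell, all the way to the lower left and lower right boundary chains --- this is precisely why, in Lemma~\ref{L:xx}, each step of \eqref{E:FES} increases the lengths of both lower boundary chains by one, and why the paper writes the set of new elements as $[z_{l,n_l},m]\uu[z_{r,n_r},m]$ with $z_{l,n_l}$ and $z_{r,n_r}$ on the lower boundary. Consequently your case analysis for ``$a$ is new'' omits all the new elements strictly below your $a_l'$ and $a_r'$; each of these does have two upper covers (one old, one the next new element up the chain), so it satisfies~(ii), but your proof as written never considers them. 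The same misunderstanding undermines your persistence argument for~(ii): the fork alters cover relations for every old element at the bottom of a cell along the two descending trajectories, not only for $o$. The statement you actually need --- that the fork only subdivides or adds covering edges, so no old element loses an upper cover and meet-reducibility persists --- is true and easy, but it is not what you wrote. Your concern about an $\SN7$ sublattice surviving the passage from $L_{s-1}$ to $L_s$ is legitimate (the paper is silent on it); the clean way to settle it is the standard fact that $L_{s-1}$ is a sublattice of $L_s=L_{s-1}[S^{s-1}]$, rather than the ad hoc bound argument you sketch, which again presupposes the three-element picture of the fork. With the construction described correctly, your argument goes through and coincides with the paper's.
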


\begin{proof}
By induction on $s$ as in \eqref{E:FES}. 
If $s = 1$, then $L = D$,
and the statement holds for a grid.
Let the statement hold for $s - 1$.  
The new elements of $L_{s}$, see Figure~\ref{F:forknotation},
form the set $[z_{l, n_l}, m] \uu [z_{r, n_r}, m]$, 
and they consist of the element $m$---satisfying (ii)---or 
an element in the set 
$[z_{l, n_l}, b_l] \uu [z_{r, n_r}, b_r]$,
all of which are meet-reducible, so satisfying (ii). 
\end{proof}

\begin{lemma}\label{L:prime1}%Lemma~\ref{L:prime1}
Let $L$ be a slim rectangular lattice
and let $p \in L$.
If $p \neq 1$ and $p$ is 
in~the upper left boundary of $L$,
then there exists an element $q$ 
in~the lower right boundary of $L$,
so that $\set{\id{p}, \fil{q}}$ is a partition of $L$.
\end{lemma}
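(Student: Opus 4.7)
The plan is to prove Lemma~\ref{L:prime1} by induction on $s$, the length of a Cz\'edli-Schmidt Sequence~\eqref{E:FES} from the grid $D$ to $L$.

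For the base case $s = 1$, so $L = D$ is a product of two chains, I would identify the elements of $D$ with coordinate pairs so that $c_l$ is at the top of the left factor and $c_r$ at the top of the right factor. An element $p$ on the upper left boundary with $p \neq 1$ then takes the form of a pair with maximal second coordinate and first coordinate $i$ strictly less than the maximum, and setting $q$ to be the pair with first coordinate $i+1$ and second coordinate $0$ places $q$ on the lower right boundary, with the partition $\set{\id{p}, \fil{q}}$ verified by direct calculation.

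For the inductive step, suppose the statement holds for $L_{s-1}$ and write $L = L_s = L_{s-1}[S^{s-1}]$ with $S^{s-1} = \set{o, a_l, a_r, t}$. By Corollary~\ref{C:upper} the upper left boundary of $L_s$ coincides with that of $L_{s-1}$, so $p$ lies on the upper left boundary of $L_{s-1}$ and $p \neq 1$ there; applying the induction hypothesis yields some $q' \in L_{s-1}$ on the lower right boundary of $L_{s-1}$ with $\set{\id{p}, \fil{q'}}$ a partition of $L_{s-1}$. I would then extend this partition over the new elements of $L_s$, distinguishing three cases based on the position of $S^{s-1}$ relative to the cut. The partition forces $o \leq p$ or $o \geq q'$, and similarly for $t$; since $q' \not\leq p$ (else $q'$ would lie in both classes), the combination $o \geq q'$ with $t \leq p$ is impossible. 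If $t \leq p$, then every new element of $L_s$ lies in the enlarged interval $[o, t]$ of $L_s$ and is bounded above by $t \leq p$, joining $\id{p}$ in $L_s$, so $q := q'$ works. Symmetrically, if $o \geq q'$, set $q := q'$. The remaining straddling case is $o \leq p$ and $t \geq q'$ with neither $t \leq p$ nor $o \geq q'$; by the planarity and slimness of $L$ this forces $a_l \leq p$ and $a_r \geq q'$, and the new elements of $L_s$---the middle element $m$ together with the chains $[z_{l, n_l}, b_l]$ and $[z_{r, n_r}, b_r]$ of Figure~\ref{F:forknotation}---split according to the planar left/right orientation. Setting $q$ to be the new element introduced on the lower right boundary of $L_s$ by the fork (existence ensured by Lemma~\ref{L:xx}) yields the desired partition of $L_s$.

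The main obstacle is the straddling case: one must pin down exactly which new element of the lower right boundary of $L_s$ becomes the updated $q$, and verify that each new element of the fork insertion falls on the correct side of the partition. Checking that the left chain elements remain below $p$ and that the right chain elements together with $m$ lie above the new lower-right-boundary element requires a careful tracking of the Cz\'edli-Schmidt fork insertion in relation to the upper boundary element $p$.
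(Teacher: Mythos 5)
Your proposal follows the paper's own route: induction along a Cz\'edli--Schmidt sequence, the grid as base case (your coordinate computation of $q$ is correct, modulo a left/right labelling slip), and a three-way case split in the inductive step according to how $S^{s-1}$ sits relative to the cut $\set{\id{p},\fil{q'}}$, ending with $q=z_{r,n_r}$ in the straddling case. The structure matches, but two points are genuine gaps rather than omitted routine details.

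First, the two non-straddling cases are not symmetric, because a fork insertion is not self-dual: every new element lies below $m\prec t$, so when $t\le p$ all of them land in $\id{p}$ at once, but when $o\ge q'$ the two tails of the fork descend from $m$ past $o$ down to the lower boundaries of $L_s$ and therefore leave $\fil{q'}$. One must still argue that each tail element that fails to be above $q'$ is below $p$ --- essentially the same verification you defer in the straddling case. (The paper's Case 1 records exactly this: the new elements satisfy $W\ci\fil{q'}\uu\id{p}$, not $W\ci\fil{q'}$.) Your auxiliary claim that every new element lies in $[o,t]$ is false for the same reason, although the half of it you actually use (bounded above by $t$) is correct. Second, you explicitly leave the straddling case unfinished; besides sorting the new elements onto the two sides, one must also check that no old element of $L_{s-1}$ changes sides when $q'$ is replaced by $z_{r,n_r}\prec q'$, i.e., that an old element above $z_{r,n_r}$ is already above $q'$ --- a point your sketch does not mention. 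Until the $o\ge q'$ case and the straddling case are actually carried out, the proposal is a correct plan with the decisive verifications missing.
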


\begin{proof}
By induction on $s$ as in \eqref{E:FES}.
If $s = 1$, then $L = D$,
and the statement holds for a grid with $q = p \mm c_r$.
Let the statement hold for $s - 1$,
and therefore, for $L_{s-1}$.
So let $p \neq 1$, 
let $p$ be on~the upper left boundary of $L_{s}$ 
(or symmetrically).  
Recall that by Corollary~\ref{C:upper},
the upper left boundaries of $L$ and $L_{s-1}$ 
are the same.
Let $q_{s-1}$ be the element 
in~the lower right boundary of $L_{s-1}$
that exits by the induction hypothesis
and let $S = S^{s-1}$ be the covering square of $L_{s-1}$.
We use the notation: 
\[W = [m, z_{r, n_r}] \uu[m, y_{l, n_l}].\]

There are three cases to distinguish.

\emph{Case 1.} $S \ci \filsub{L_{s-1}}{q_{s-1}}$, 
as illustrated in Figure~\ref{F:S+123}.
Then 
\[
  W \ci \filsub{L_{s}}{q_{s-1}} \uu \idsub{L_{s}}{p},
\] 
therefore, $\set{\id{p}, \fil{q}}$ is a partition of $L$ with $q = q_{s-1}$. 

\begin{figure}[htb]
\centerline{\includegraphics[scale=.75]{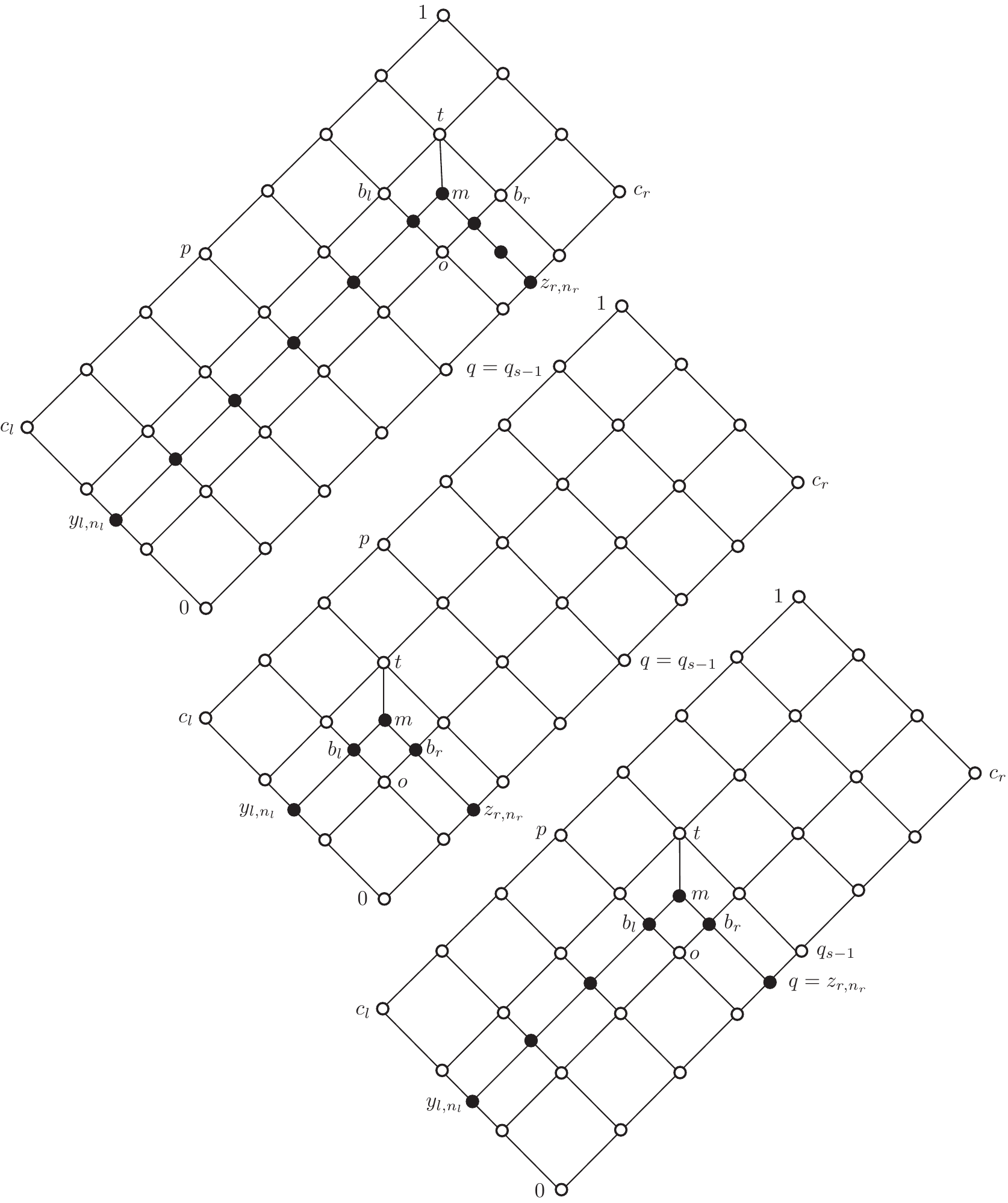}}
\caption{Proof of Lemma~\ref{L:prime1}}
\label{F:S+123}
%Figure~\ref{F:S123}
\end{figure}

\emph{Case 2.} $S \ci \idsub{L_{s-1}}{p}$, 
as illustrated in Figure~\ref{F:S+123}.
In this case, 
\[
  W \ci \idsub{L_{s}}{p},
\] 
so $\set{\id{p}, \fil{q}}$ is a partition of $L$ with $q = q_{s-1}$. 

\emph{Case 3.} $S \nci \filsub{L_{s-1}}{q_{s-1}}, 
\idsub{L_{s-1}}{q_{s-1}}$, 
also illustrated in Figure~\ref{F:S+123}.
In this case, the two elements 
on the right upper boundary of $S$
are in $\filsub{L_{s-1}}{q_{s-1}}$ 
and the other two elements are in $\idsub{L_{s-1}}{p}$.
The newly inserted elements in $[m, y_{l, n_l}]$ 
are in $\idsub{L_{s-1}}{p}$, 
and the rest of them, $[m, z_{r, n_r}]$, 
are in $\filsub{L}{z_{r, n_r}}$,
so $\set{\id{p}, \fil{q}}$ 
is a partition of $L$ with $q = z_{r, n_r}$.
Note that $p \mm c_r \prec q \prec q_{n-1}$.
\end{proof}

\begin{corollary}\label{C:prime1}%Lemma~\ref{C:prime1}
Let $L$ be a slim rectangular lattice and let $p \in L$.
If $p \neq 1$ and $p$ is in~the upper boundary of $L$,
then the ideal $P = \id p$ of $L$ is prime.
\end{corollary}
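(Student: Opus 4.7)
The plan is to apply Lemma~\ref{L:prime1} essentially verbatim and then translate its conclusion into primality via a one-line lattice-theoretic observation. Since $p$ lies on the upper boundary of $L$, it is on either the upper left boundary or the upper right boundary of $L$. In the first case, Lemma~\ref{L:prime1} provides an element $q$ in the lower right boundary with $\{\id p, \fil q\}$ a partition of $L$; in the second case, the symmetric version (noted parenthetically in Lemma~\ref{L:prime1}) provides an element $q$ in the lower left boundary with the analogous partition. In either case we reduce to the same set-theoretic situation $L \setminus \id p = \fil q$.

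The remaining step is to deduce primality of $P = \id p$ from this partition. The key point is that $\fil q$ is a principal filter and therefore closed under binary meets. Given any $x, y \in L$ with $x \mm y \in \id p$, suppose for contradiction that neither $x$ nor $y$ lies in $\id p$. By the partition, both $x$ and $y$ then lie in $\fil q$, hence $x \mm y \in \fil q$, contradicting $x \mm y \in \id p$. Therefore $x \in \id p$ or $y \in \id p$, which is exactly the definition of a prime ideal.

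There is no real obstacle here: all the geometric/inductive content is packaged inside Lemma~\ref{L:prime1}, and the corollary is just the recognition that ``complement of an ideal is a filter'' and ``ideal is prime'' are equivalent in any lattice. The only minor subtlety worth flagging explicitly is the appeal to the symmetric case, which is justified by the left-right symmetry of the definition of a slim rectangular lattice and by Corollary~\ref{C:upper}, so that the roles of $c_l$ and $c_r$ may be interchanged without further argument.
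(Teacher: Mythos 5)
Your proposal is correct and follows exactly the paper's route: the paper's entire proof is ``By Lemma~\ref{L:prime1} and its symmetric counterpart,'' and you have simply made explicit the standard observation that an ideal whose set-theoretic complement is a filter is prime. Nothing further is needed.
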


\begin{proof}
By Lemma~\ref{L:prime1} and its symmetric counterpart.
\end{proof}

A very special case of this result 
was found in G. Gr\"atzer~\cite{gG19}.
In a sense, this paper was the starting point
of the present one.

\begin{theorem}\label{T:prime}%Theorem~\ref{T:prime}
Let $L$ be a slim rectangular lattice
and let $1 \neq a \in L$. 
Then $P = \id a$ is a prime ideal of $L$
if and only if $a$ is in~the upper left or 
upper right boundary of~$L$.
\end{theorem}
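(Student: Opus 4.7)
The plan is to prove the ``if'' direction by direct appeal to Corollary~\ref{C:prime1}, and to prove the ``only if'' direction by contrapositive, using Lemma~\ref{L:four} to split into cases. The upper boundary of a slim rectangular lattice is the union of its upper left and upper right boundaries, so Corollary~\ref{C:prime1} already handles the ``if'' direction. For the converse, I will assume $a \neq 1$ lies on neither the upper left nor the upper right boundary of $L$ and produce elements $x, y \in L$ with $x, y \not\le a$ and $x \mm y \le a$, witnessing that $\id a$ fails to be prime.

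By Lemma~\ref{L:four}, such an $a$ must be either meet-reducible or a middle element of some $\SN 7$ sublattice. In the meet-reducible case, pick $x, y > a$ with $x \mm y = a$: neither $x$ nor $y$ lies in $\id a$, but their meet does, so $\id a$ is not prime.

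In the middle-element case, let $a$ be the middle element of an $\SN 7$ sublattice $S$. Let $t$ be the top of $S$ and let $l, r$ be the other two covers of $t$ inside $S$; by Lemma~\ref{L:known}(ii), $t$ covers all three of $l, a, r$ in~$L$. Since distinct covers of a common element are incomparable, $l \jj a = t = r \jj a$, forcing $l, r \not\le a$. A direct inspection of $\SN 7$ (Figure~\ref{F:N7}) shows that the meet $l \mm r$ taken inside $S$ is the bottom element of $S$, and because $S$ is a sublattice of $L$, this is also the value of $l \mm r$ computed in~$L$. The bottom of $S$ lies below $a \in S$, so $l \mm r \le a$ while $l, r \notin \id a$; hence $\id a$ is not prime.

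The main step requiring care is the identification of $l \mm r$ with the bottom element of the $\SN 7$ sublattice: this is a small combinatorial verification in the seven-element lattice of Figure~\ref{F:N7}, but it is the only spot in the argument where the specific geometry of $\SN 7$ is used. Everything else follows by case analysis via Lemma~\ref{L:four} together with standard semimodular bookkeeping about distinct covers.
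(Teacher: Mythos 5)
Your proposal is correct and follows essentially the same route as the paper: Lemma~\ref{L:four} reduces the converse to the meet-reducible case and the middle-element case, and in the latter the two outer elements of the $\SN 7$ meet to its bottom, which lies in $\id a$ while they do not. One small inaccuracy: Lemma~\ref{L:known}(ii) does not yield that $t$ covers $l, a, r$ in $L$ (it runs in the opposite direction), but you do not need covering relations here---$l$ and $r$ are already incomparable with $a$ because they are distinct coatoms of the sublattice $S$ and $S$ carries the restricted order of $L$.
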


\begin{proof}
Since $\id p$ is not a prime 
either for a meet reducible $p$ 
or for a middle element
$p = m$ (because $m > a_l \mm a_r$) as in Figure 2,  
Lemma~\ref{L:four} applies.
\end{proof}

\section{The structure of congruence classes}\label{S:structure}
%Section~\ref{S:structure}

I have known for a long time  
how adjacent congruence classes interface in a lattice.
In this section, I prove two of these results,
because they will be needed in Section~\ref{S:Primecong}.
The first lemma is related to some discussions
in G. Cz\'edli \cite{gC96} and \cite{gC09}.

\begin{lemma}\label{L:congclass}%Lemma~\ref{L:congclass}
Let $\bga$ be a congruence of a lattice $L$ 
and let $A = [0_A, 1_A]$ and $B = [0_B, 1_B]$ 
be congruence classes of $\bga$
satisfying that $A \prec B$ in $L/\bga$.
Then for every $x \in A$, 
there is a smallest $x^B \in B$ with $x \leq x^B$
and for every $x \in B$, 
there is a greatest $x_A \in A$ with $x \geq x_A$.
Moreover, $(x^B)_A \prec x^B$ for every $x \in A$.
\end{lemma}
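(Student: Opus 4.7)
The plan is to realize the sought-for elements by explicit formulas: I would take $x^B := x \jj 0_B$ for $x \in A$ and, dually, $x_A := x \mm 1_A$ for $x \in B$. Since any congruence class is a convex sublattice, $A$ and $B$ are indeed intervals, and the hypothesis $A \prec B$ in $L/\bga$ gives, for every $a \in A$ and $b \in B$, that $a \jj b \in B$ and $a \mm b \in A$. This last fact is exactly what is needed to see that the two candidates land in the correct classes.

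First I would verify that $x^B$ does what is claimed. The element $x^B = x \jj 0_B$ lies in $B$ because $x \in A$ and $0_B \in B$ (so their join falls in the quotient-class $A \jj B = B$), and it automatically satisfies $x^B \geq 0_B$ as well as $x^B \geq x$. For minimality: any $y \in B$ with $y \geq x$ also satisfies $y \geq 0_B$, hence $y \geq x \jj 0_B = x^B$. The existence and characterization of $x_A$ is entirely dual, using $a \mm b \in A$.

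For the covering statement, I would argue as follows. Suppose $(x^B)_A \leq u \leq x^B$ in $L$, and let $C$ denote the $\bga$-class of $u$. Then $A \leq C \leq B$ in $L/\bga$, and because $A \prec B$ this forces $C = A$ or $C = B$. If $C = A$, then $u \in A$ with $u \leq x^B$, whence $u \leq x^B \mm 1_A = (x^B)_A$, so $u = (x^B)_A$. If $C = B$, then $u \geq 0_B$; and since $x \leq x^B \mm 1_A = (x^B)_A \leq u$, also $u \geq x$, so $u \geq x \jj 0_B = x^B$ and $u = x^B$. This yields the cover $(x^B)_A \prec x^B$.

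There is no substantive obstacle here; the argument is entirely formal, relying only on the definition of the quotient lattice and on convexity of congruence classes. The one detail worth flagging is the identity $(x^B)_A = x^B \mm 1_A$, which is what lets the original element $x$ be shown to lie below $(x^B)_A$ and so carries the $C = B$ case through.
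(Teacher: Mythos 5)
Your proposal is correct and follows essentially the same route as the paper: the same explicit formulas $x^B = x \jj 0_B$ and $x_A = x \mm 1_A$ (the paper calls this a standard Galois connection), and the same use of $A \prec B$ in $L/\bga$ to rule out any intermediate element between $(x^B)_A$ and $x^B$. Your case analysis ($C = A$ or $C = B$) is just a slightly more explicit rendering of the paper's contradiction argument, and the detail you flag about $x \leq (x^B)_A$ checks out.
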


\begin{proof}
Define $x^B = x \jj 0_B$ for $x \in A$ and 
$y_B = y \mm 1_A$ for $y \in B$.
This sets up a~standard Galois connection, 
so only the last statement needs proof.
Let us assume that $(x^B)_A < u < x^B$.
By the definition of $x_B$, 
it follows that $u \nin B$ and similarly, $u \nin A$.
Therefore, $A < u/\bga < B$ in $L/\bga$,
contrary to the assumption that $A \prec B$ in $L/\bga$.
\end{proof}

\begin{figure}[htp]
\centerline{\includegraphics{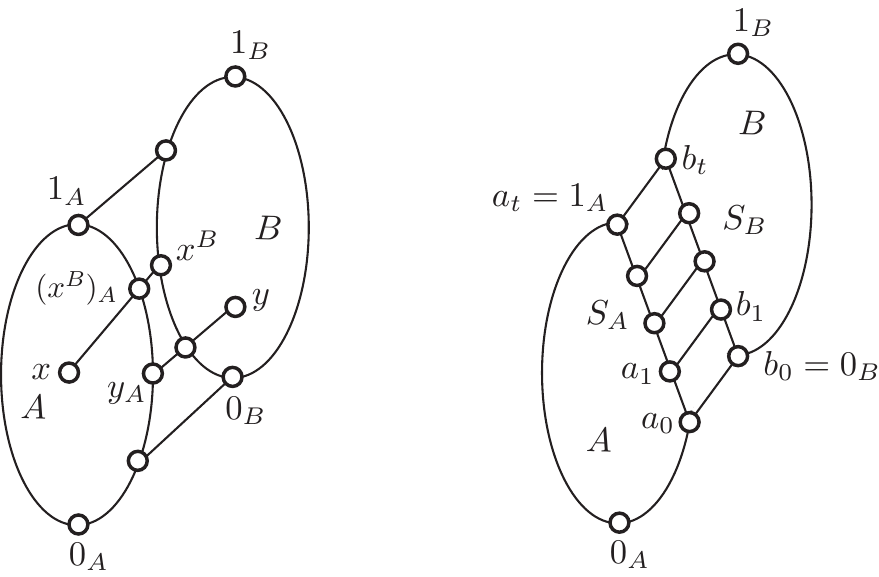}}
\caption{Two illustrations of $A \prec B$ in $L/\bga$}
\label{F:edge}
%Figure~\ref{F:edge}
\end{figure}

\begin{lemma}\label{L:congclass2}%Lemma~\ref{L:congclass2}
Let $L$ be a slim, planar, semimodular lattice. 
Let $\bga$ be a congruence of $L$ 
and let $A, B$ be congruence classes of $\bga$
satisfying that $A \prec B$ in $L/\bga$.
Then there is a maximal chain 
\[
   S_A =\set{1_A \mm 0_B = a_0 \prec a_1 \prec \dots \prec a_t = 1_A}
\]
on the right boundary of $A$ and
there is a maximal chain 
\[
   S_B = \set{0_B = b_0 \prec b_1 \prec \dots \prec b_t 
       = 1_A \jj 0_B}
\]
on the left boundary of $B$---or symmetrically.
The chain $S_A$ is isomorphic to $S_B$ by the map 
$\gf_A \colon x \mapsto x \jj 0_B$;
the inverse isomophism is  
$\gf_B \colon x \mapsto x \mm 0_B$.
\end{lemma}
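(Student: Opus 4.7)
The plan is to build the two chains in parallel, level by level, using the Galois connection supplied by Lemma~\ref{L:congclass}, matching covers via semimodularity, and then appealing to slimness and planarity to place the resulting chains on the prescribed boundaries.

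Set $a_0 = 1_A \mm 0_B$ and $b_0 = 0_B$. Applying Lemma~\ref{L:congclass} with $x = a_0$ gives $a_0 \jj 0_B = 0_B = b_0$ and $(b_0)_A = a_0$, whence $a_0 \prec b_0$. I proceed by induction: assuming $a_i \prec b_i$ have been constructed with $a_i \in A$, $b_i \in B$, $b_i = a_i \jj 0_B$, and $a_i < 1_A$, I pick a cover $a_{i+1} \succ a_i$ in $A$ lying on the right boundary of $A$ and set $b_{i+1} = a_{i+1} \jj 0_B$. Then $a_{i+1} \jj b_i = a_{i+1} \jj a_i \jj 0_B = b_{i+1}$, and since $a_{i+1}$ and $b_i$ are two distinct covers of $a_i$---distinctness coming from the Galois connection, as $a_{i+1} \leq b_i$ would force $a_{i+1} \leq (b_i)_A = a_i$---semimodularity yields $a_{i+1} \prec b_{i+1}$ and $b_i \prec b_{i+1}$. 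The induction terminates at $a_t = 1_A$, where $b_t = 1_A \jj 0_B$.

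The remaining task is to identify $S_A$ and $S_B$ with the claimed boundary chains of $A$ and $B$. Each quadruple $\set{a_i, b_i, a_{i+1}, b_{i+1}}$ is a covering square in $L$; slimness together with Lemma~\ref{L:known2} forces the interval $[a_i, b_{i+1}]$ to be a $4$-cell with no element strictly inside. These $4$-cells fit together into a strip in the planar diagram that separates $A$ from $B$: the chain $S_A$ runs along the right boundary of $A$ and $S_B$ along the left boundary of $B$, or the symmetric configuration arises if $B$ abuts $A$ on the left instead. The isomorphism $\gf_A$ is immediate from the construction, and its inverse is the restriction of the map $y \mapsto y \mm 1_A$ from Lemma~\ref{L:congclass}.

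The main obstacle is this final identification, that is, passing from the purely order-theoretic construction to a statement about planar boundaries. I would enforce the correct landing by taking $a_{i+1}$ to be the rightmost cover of $a_i$ in $A$ at each stage and using Lemma~\ref{L:known}(i) together with the $4$-cell structure to exclude any element of $L$ strictly to the right of the constructed strip. The ``or symmetrically'' clause then reflects the planar position of $B$ relative to $A$, which appears to be an unavoidable case split.
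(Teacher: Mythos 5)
Your proposal is correct and takes essentially the same route as the paper: both arguments build $S_A$ as a chain from $a_0=1_A\mm 0_B$ to $1_A$ on the right boundary of $A$, define $b_i=a_i\jj 0_B$, and use semimodularity plus the $4$-cell structure to place $S_B$ on the left boundary of $B$. The only notable differences are that you obtain $b_i\prec b_{i+1}$ directly from the ``two distinct covers'' form of semimodularity where the paper instead compares the lengths of two maximal chains in $[a_0,b_t]$ via the Jordan--H\"older property, and that the paper explicitly invokes Kelly and Rival \cite{KR75} to put $a_0$ on the boundary of $A$ and disposes of the degenerate comparable case $1_A<0_B$ (singleton chains) up front.
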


\begin{proof}
If the elements $1_A$ and $0_B$ are comparable,
then $1_A < 0_B$ and the statement is true
with the singletons $S_A$ and $S_B$.
So we can assume that $1_A$ and $0_B$ are incomparable. 
By symmetry, 
we can also assume that $0_B$ is to the right of $1_A$.

Let $a_0 = 1_A \mm 0_B$ and $b_0 = 0_B$. 
If $a_0 = 0_B$, then $a_0 \in A \ii B$, a contradiction, 
since $A \prec B$ in $L/\bga$, 
so $A$ and $B$ are disjoint. 
Hence, $a_0 < b_0$.

We claim that $a_0 \prec b_0$.
Indeed, let there be an element $z$ of $L$
with $a_0 < z < b_0$.
If $z \in A$, then $z \leq 1_A$, 
so $z = a_0$, a contradiction.
If $z \in B$, 
then $0_B$ is not the smallest element of $B$,
a contradiction.
Therefore,  $A < z/\bga < B$ in $L/\bga$,
contradicting the assumption that  
$A \prec B$ in $L/\bga$. 
This verifies the claim.
  
By David Kelly and Ivan Rival \cite{KR75},
this implies that
$a_0$ is on the boundary of $A$, say,  
on the right boundary.
This allows us to take a maximal chain
\[
   S_A = \set{a_0 \prec a_1\prec \dots \prec a_t=1_A}
\]
of $[a_0,1_A]$ on the right boundary of $A$. 
Put $b_i = a_i \jj b_0$ for $i=0, \dots, t$.
Since $A \jj B = B$ in $L/\bga$,   
we get that $b_i \in B$ for $i=0, \dots, t$. 
So $a_i<b_i$. 
By~semimodularity, $a_i\prec b_i$ for $i=0, \dots, t$. 
Again, by semimodularity, 
we obtain that $b_i\preceq b_{i+1}$.
Since $1_A=a_t\prec b_t$, 
we can see that
\[
   \set{a_0 \prec a_1\prec \dots \prec a_t = 
      1_A \prec b_t}\]
is a maximal chain 
in the interval $[a_0,b_t]$ of length is $t$.
The chain 
\[\set{a_0\prec b_0 \preceq b_1 \preceq \dots 
   \preceq b_t}
\] 
is a maximal chain in the same interval, 
so by the Jordan-H\"older property
of finite semimodular lattices, 
we obtain that it is also of length $t$.
Now it follows that 
\[
   S_B= \set{b_0 \prec b_1 \prec \dots \prec b_t}
\]
satisfies the requirements of the lemma, 
since all the squares depicted 
on the right of Figure~\ref{F:edge} are covering squares. 
\end{proof}

We call $S_A \times \SC 2$ the \emph{ladder} 
associated with $A \prec B$. 
Note that it has a single rung if $1_A < 0_B$ 
(equivalently, if $1_A \prec 0_B$).

\section{Prime congruences}\label{S:Primecong}
%Section~\ref{S:Primecong}

A congruence $\bgp$ of a lattice $L$ is \emph{prime}
if it has exactly two blocks. 
Clearly, one of its blocks is a prime ideal $P$.
Since $P$ determines $\bgp$,
we use the notation $\bgp(P)$ for $\bgp$. 
Every prime congruence of $L$
is a dual atom in $\Con(L)$.
Also, if a congruence has only two congruence classes,
then it is prime.

\begin{theorem}\label{T:dualatom}%Theorem~\ref{T:dualatom}
Let $L$ be a slim rectangular lattice 
and let the congruence $\bgp$ of $L$ 
be a dual atom in $\Con L$. 
Then the congruence $\bgp$ is prime.
\end{theorem}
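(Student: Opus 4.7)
The plan is to show that the $0$-block $A := 0/\bgp$, with $a := 1_A$, is a prime ideal of $L$. Once this is established, Theorem~\ref{T:prime} places $a$ on the upper left or upper right boundary, so the prime congruence $\bgp(A)$---with the two blocks $A$ and $L \setminus A$---satisfies $\bgp \leq \bgp(A) < \mathbf{1}$; dual-atomness of $\bgp$ then forces $\bgp = \bgp(A)$, which is prime. By Lemma~\ref{L:four}, it remains to rule out the possibilities that $a$ is meet-reducible or a middle element.

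For the middle-element case, $a = m$ is the middle of an $\SN 7$ sublattice with lower covers $b_l, b_r \in A$ and neighbors $a_l, a_r, t \notin A$ satisfying (within this $\SN 7$) $b_r \jj a_l = t = b_l \jj a_r$ and $a_l \mm a_r$ equal to the $\SN 7$'s bottom. The identities $b_r \jj a_l = t = b_l \jj a_r$, together with $b_l, b_r \in A$, force $a_l/\bgp = t/\bgp = a_r/\bgp$; call this common block $B$. Since $B$ is a convex sublattice of $L$, $a_l \mm a_r$ lies in $B$; but $a_l \mm a_r$ also lies in $A$ (being below $m$), so $A = B$, putting $a_l$ in $A$---a contradiction.

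For the meet-reducible case, Lemma~\ref{L:known}(i) yields two upper covers $u_l, u_r$ of $a$, both outside $A$, so the join-irreducibles $\con_L([a, u_l])$ and $\con_L([a, u_r])$ of $\Con L$ are both not below $\bgp$. In a finite distributive lattice, a dual atom has (by Birkhoff duality) a unique join-irreducible not below it, whence $\con_L([a, u_l]) = \con_L([a, u_r])$. The intended contradiction is the structural statement that in a slim rectangular lattice, the principal congruences generated by the two opposite sides of a covering square emanating from a meet-reducible element are distinct. I would prove this by induction on the length $s$ of the Cz\'edli-Schmidt Sequence: the base case $L = \SC p \times \SC q$ is transparent because $\Con L$ is Boolean, with atoms in bijection with the coordinate directions, so the two sides go in orthogonal directions. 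The main obstacle is the inductive step---verifying that a fork insertion does not merge the two side-classes of any covering square in the resulting lattice---which requires tracking how the directional swing relation interacts with the newly inserted middle element, using Lemma~\ref{L:SPSproj} and the length condition \eqref{E:geq}.
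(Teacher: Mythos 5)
Your overall strategy---show that the top element $a$ of the class $A=0/\bgp$ must lie on the upper boundary, so that $A=\id a$ is a prime ideal, and then squeeze $\bgp\leq\bgp(A)<\one$ by dual-atomness---is sound and genuinely different from the paper's proof, which instead counts the blocks of $\bgp$ on the upper left boundary, handles the cases $b=1$ and $b=2$ by exhibiting $L/\bgp$ as a simple distributive lattice (via a transversal chain, respectively a gluing over the ladder of Lemma~\ref{L:congclass2}), and excludes $b\geq3$ with the Swing Lemma. Your middle-element case is correct and complete: from $b_l\equiv b_r\pod{\bgp}$ you get $a_l=a_l\jj b_l\equiv a_l\jj b_r=t\equiv a_r\pod{\bgp}$, and then $o=a_l\mm a_r$ lies both in $t/\bgp$ and in $A=[0,m]$, a genuine contradiction.

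The gap is the meet-reducible case. The reduction itself is fine: $\con{[a,u_l]}$ and $\con{[a,u_r]}$ are join-irreducible, neither lies below the dual atom $\bgp$, and a dual atom of a finite distributive lattice has a unique join-irreducible not below it, so the two congruences coincide. But the statement you then need---that in a slim rectangular lattice the two upper edges at a meet-reducible element always generate distinct congruences---is exactly where the substance of this case lies, and you do not prove it; you yourself label the inductive step of your proposed induction on the Cz\'edli--Schmidt Sequence as ``the main obstacle'' and stop there. That step is not routine: a fork insertion creates a new swing at the inserted middle element and can in principle collapse congruences of prime intervals that were distinct in $L_{i}$, so one must actually track how the join-irreducible congruences of $L_{i+1}$ sit over those of $L_{i}$ (the machinery of \cite{gG16} is what is needed, not merely the Swing Lemma together with \eqref{E:geq}). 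Until that lemma is established, the proof is incomplete. Note that the paper sidesteps this entirely: its Swing Lemma argument is applied only to prime intervals on the upper boundary, where one cannot climb up, so \eqref{E:geq} yields an immediate contradiction, and no claim about interior meet-reducible elements is ever required.
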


\begin{proof}
Let $\bgp$ be a dual atom in $\Con L$.
Let $\bgp$ partition the upper left boundary
into $b$ blocks.

\emph{Case 1}: $b = 1$. 
Equivalently, $\cng c_l = 1 (\bgp)$.
Meeting both sides with $c_r$,
we obtain that $\cng 0 = c_r (\bgp)$.
By Corollary~\ref{C:rightboundary},
for every $x \in L$, the element~$x \jj c_r$ 
is in the upper right boundary of $L$,
so $\cng x=x \jj c_r(\bgp)$. 
Thus we can choose a subchain $C$ of $[c_r,1]$
with the property that every congruence class
of $\bga$ contains exactly one element of $C$.
By the First Isomorphism Theorem *
(see, for instance, \cite[Exercise I.3.61]{LTF}),
we have the isomophism $L/\bgp \iso C$,
so $L/\bgp$ is a chain.
Since the congruence $\bgp$ of $L$ 
is a dual atom in $\Con L$,
by the Second Isomorphism Theorem 
((see, for instance, \cite[Theorem 220]{LTF}))
the lattice $L/\bgp$ is simple. 
A simple distributive lattice has two elements, 
so $\bgp$ is prime, as required.

\emph{Case 2}: $b = 2$. 
Equivalently, there is a prime interval $\fp$ 
on the upper left boundary of $L$, such that
$\cng c_l=0_\fp (\bgp)$, $\cng 1_\fp=1(\bgp)$,
and $\ncng 0_\fp=1_\fp(\bgp)$, or symmetrically.

For $c_l = 0_\fp$ or $1_\fp = 1$ or both, 
define $c_l = 0_\fp$. Then $L/\bgp \iso Q/\bgp$. 

Otherwise, $c_l <  0_\fp \prec 1_\fp < 1$.

We use the ladder of Lemma~\ref{L:congclass2}, 
see Figure~\ref{F:edge}. 
Let $q$ be the cover of $0_\fp \mm c_r$ 
on the lower right boundary.
Note the ideal $P = [0, 0_\fp]$
and the filter $Q = [q , 1]$. 
The two sides of the ladder are
\begin{align*}
   S_A &=\set{0_\fp \mm c_r = a_0 \prec \dots 
       \prec a_t = 0_\fp},\\
   S_B &= \set{0_B = q  \prec \dots \prec b_t 
       = 1_\fp},
\end{align*}
using the notation of Figure~\ref{F:PQ}.
The chain $S_A$ is shaded black and 
the chain $S_B$ is shaded gray.
\begin{figure}[htb]
\centerline{\includegraphics{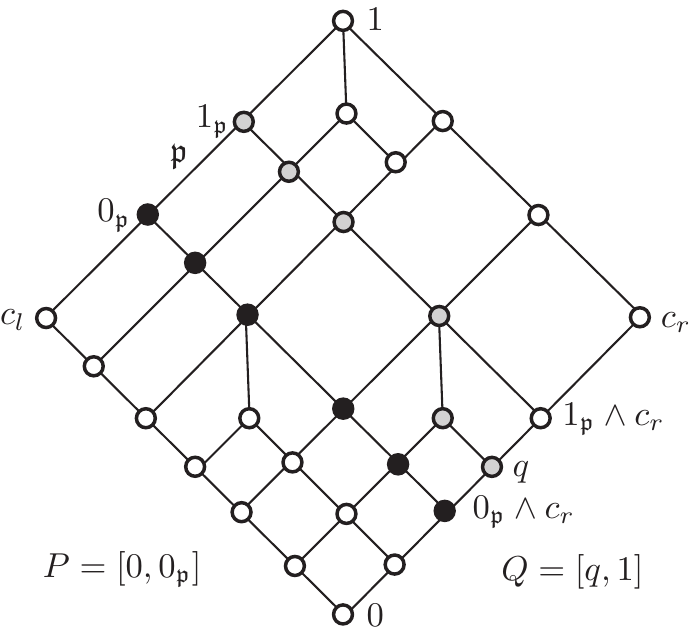}}
\caption{Notation for the proof of 
Theorem~\ref{T:dualatom}, Case 2; 
the chain $S_A$ is shaded black
and the chain $S_B$ is shaded gray}
\label{F:PQ}
%Figure~\ref{F:xx}
\end{figure}

We argue as in Case 1, \emph{mutatis mutandis},
that for every element $x \in P$, 
there is an element $y \in S_A$ 
such that $\cng x=y(\bgp)$. 
The same way, for every element $x \in [1_\fp \mm c_r, 1]$, 
there is an element $y \in [c_r, 1]$ 
such that $\cng x=y(\bgp)$.

Since the corresponding prime intervals 
of $S_A$ and $S_B$ are perspective
(as illustrated by Figure~\ref{F:edge}),
it follows that $S_A/\bgp$ and $S_B/\bgp$
are isomorphic. 
Therefore, $L/\bgp$ can be obtained by gluing together
$[0, 0_\fp]/\bgp$ and $[q, 1]/\bgp$ over a chain
$S_A/\bgp \iso S_B/\bgp$. 
Both lattices $[0, 0_\fp]/\bgp$ and $[q, 1]/\bgp$
are slim rectangular lattices so their gluing
over $S_A/\bgp \iso S_B/\bgp$ 
is also a slim rectangular lattice
by G. Gr\"atzer and E. Knapp~\cite[Lemma 5]{GKn09}.

Since $[0,0_p]/\bgp$ and $[q,1]/\bgp$ 
are isomorphic to the chains
$C_A/\bgp$ and $C_B/\bgp$, respectively, 
the lattices $[0,0_p]/\bgp$ and $[q,1]/\bgp$ 
are distributive. 
Gluing these two lattices over 
$S_A/\bgp \cong S_B/\bgp$,  
the Second Isomorphism Theorem
gives again that $L/\bgp$ 
is a simple distributive lattice and so $\bgp$ is prime,
and the statement follows.

\emph{Case 3}: $b \geq 3$. 
Equivalently, there are prime intervals $\fp$ and $\fq$
on the upper left boundary of $L$ (or symmetrically)
such that $1_\fp < 0_\fq$, $\ncng 0_\fp = 1_\fp (\bgp)$,
and $\ncng 0_\fq = 1_\fq (\bgp)$. 
Since $\bgp$ is a dual atom in $\Con L$, 
it follows that
\[
   \bgp \jj \con \fp = \bgp \jj \con \fq = \one.
\]
Therefore, $\con \fp \leq \bgp \jj \con \fq$.
Since $\fp$ is a prime interval, we get that 
$\con \fp \leq \bgp$ or $\con \fp \leq \con \fq$.
The inequality $\con \fp \leq \bgp$ 
contradicts the assumption 
that $\ncng 0_\fp = 1_\fp (\bgp)$, 
so we conclude that $\con \fp \leq \con \fq$ holds.

By the Swing Lemma (Lemma~\ref{L:SPSproj}),
there is a sequence of prime intervals \eqref{Eq:sequence}
(also satisfying \eqref{E:geq}).
Since $\fp$ is on the upper boundary of $L$,
we cannot ``climb up'' from $\fp$;
it follows that $\fp = \fr$.
Therefore, $1_\fp = 1_{\fr} \geq 1_{\fq}$
by \eqref{E:geq}, contradicting our assumption 
that $1_\fp < 0_\fq \prec 1_\fq$.
\end{proof}

Now we are ready to prove our main result.
Let $t = \length[c_l, 1] + \length[c_r, 1]$.
By Theorem~\ref{T:prime}, 
the lattice $L$ has exactly~$t$ prime ideals,
and each prime ideal has an associated prime congruence,
a dual atom. So $\Con L$ has at least $t$ dual atoms.
By~Theorem~\ref{T:dualatom}, all dual atoms of $\Con L$
are prime congruences, 
so $\Con L$ has exactly $t$ dual atoms.

\section{Meet semidistributive lattices}\label{S:Meet}
%Section~\ref{S:Meet}

A lattice $L$ is \emph{meet-semidistributive},
if the following implication holds:
\begin{equation}\label{E:xx}%\ref{E:xx}
x \mm y = x \mm z \text{ implies that } x \mm y = x \mm (y \jj z)\text{ for all } x,y,z \in L.\tag{SD$_\mm$}
\end{equation}

This implication was introduced
by  P.\,M. Whitman \cite{pW41} and \cite{pW42}
as a property of free lattices.
It also holds for SPS lattices.

\begin{lemma}\label{L:sd}
%Lemma~\ref{L:sd}
Let $L$ be an SPS lattice. 
Then the implication \lp SD$_\mm$\rp holds in $L$.
\end{lemma}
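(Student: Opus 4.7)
The plan is to argue by contradiction, producing three distinct upper covers of a single element of $L$, thereby contradicting Lemma~\ref{L:known}\,(i).

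First I would make three routine reductions. Suppose $x \mm y = x \mm z = d$ but $d < x \mm (y \jj z)$. Replacing $x$ by $x \mm (y \jj z)$ preserves the failure and forces $x \leq y \jj z$. Passing to the interval $[d, y \jj z]$, which remains an SPS lattice since semimodularity, planarity, and the absence of an $\SM3$ sublattice are all inherited by intervals, I may assume $d = 0$ and $y \jj z = 1$. Finally, replacing $x$ by any atom $a \leq x$ (noting $a \mm y \leq x \mm y = 0$ and $a \leq y \jj z$) reduces to the case where $x$ is an atom of $L$, $x \leq 1 = y \jj z$, and $x \mm y = x \mm z = 0$.

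Next I would choose $y_0 \leq y$ and $z_0 \leq z$ minimal subject to $x \leq y_0 \jj z_0$ (first minimizing $y_0$, then $z_0$), and pick lower covers $y_0^* \prec y_0$ and $z_0^* \prec z_0$. By minimality, $x \not\leq y_0^* \jj z_0$ and $x \not\leq y_0 \jj z_0^*$; combined with $x \leq y_0 \jj z_0$ and semimodularity, this gives $y_0 \jj z_0 \succ y_0^* \jj z_0$ and $y_0 \jj z_0 \succ y_0 \jj z_0^*$. The assumption $z_0 \leq y_0^* \jj z_0^*$ would imply $y_0 \jj z_0 \leq y_0 \jj z_0^*$, contradicting $x \leq y_0 \jj z_0$ together with $x \not\leq y_0 \jj z_0^*$; so $z_0 \not\leq y_0^* \jj z_0^*$, whence semimodularity yields $y_0^* \jj z_0 \succ y_0^* \jj z_0^*$, and symmetrically $y_0 \jj z_0^* \succ y_0^* \jj z_0^*$. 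Setting $m = x \jj y_0^* \jj z_0^*$, if $x \leq y_0^* \jj z_0^*$ then also $x \leq y_0^* \jj z_0$, contradicting the minimality of $y_0$; hence $m \succ y_0^* \jj z_0^*$, since $x$ is an atom.

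To conclude, I would verify that $m$, $y_0^* \jj z_0$, and $y_0 \jj z_0^*$ are pairwise distinct upper covers of $y_0^* \jj z_0^*$ in $L$. The cover relations were established above; for distinctness, $m = y_0^* \jj z_0$ would give $x \leq y_0^* \jj z_0$, contrary to the minimality of $y_0$, and analogously $m \neq y_0 \jj z_0^*$, while $y_0^* \jj z_0 = y_0 \jj z_0^*$ would force $y_0 \leq y_0^* \jj z_0$ and hence $y_0 \jj z_0 = y_0^* \jj z_0$, contradicting the strict cover $y_0 \jj z_0 \succ y_0^* \jj z_0$. Thus $y_0^* \jj z_0^*$ has three distinct upper covers, violating Lemma~\ref{L:known}\,(i). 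The main obstacle will be the third paragraph: juggling the joint minimality of $y_0$ and $z_0$ with the several semimodularity calculations to secure all four cover relations simultaneously; the distinctness argument in the final paragraph is then pure bookkeeping.
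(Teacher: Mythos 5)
Your proof is correct, and it arrives at the same final contradiction as the paper's proof---an element of $L$ with three distinct upper covers, which is impossible by Lemma~\ref{L:known}(i)---but the route you take to manufacture that configuration is genuinely different. The paper's argument is a two-liner: from $a \mm b = a \mm c < a \mm (b \jj c)$ it picks covers $a', b', c'$ of $a \mm b$ lying below $a \mm (b \jj c)$, $b$, and $c$ respectively, and asserts they are distinct because the pairwise meets of these three elements all equal $a \mm b$. You instead normalize (cut down to the interval $[d, y\jj z]$, replace $x$ by an atom), take a join cover $y_0 \jj z_0$ of that atom with $y_0$ and then $z_0$ minimal, and extract the three covers of $y_0^* \jj z_0^*$ by repeated use of semimodularity. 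Your version is longer, but what it buys is that the distinctness of the three covers falls out of the minimality of $y_0$ and $z_0$; in the paper's quick argument the distinctness claim is only immediate for the two pairs involving $a \mm (b\jj c)$, since $b \mm c$ need not equal $a \mm b$, so your route is, if anything, the more watertight of the two. Two small points to tighten: you should record that $y_0, z_0 > 0$, so that the lower covers $y_0^*, z_0^*$ actually exist (if $y_0 = 0$ then $x \le z_0 \le z$ gives $x = x \mm z = 0$, contradicting that $x$ is an atom, and symmetrically for $z_0$); and note that you are reading Lemma~\ref{L:known}(i) as a bound on the number of \emph{upper} covers, which is exactly how the paper's own proof uses it.
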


\begin{proof}
Assume that it does not hold. 
Then there are elements $a,b,c \in L$
such that $a \mm b = a \mm c$  
but $a \mm b \neq a \mm (b \jj c)$.
Then $x \neq y \in \set{a \mm (b \jj c), b, c}$
satisfy that $x \mm y = a \mm b$, 
so we cab choose elements $a',b',c' \in L$
so that $a \mm b \prec a' \leq a \mm (b \jj c)$, 
$a \mm b \prec b' \leq b$, $a \mm b \prec c' \leq c$,
contradicting Lemma~\ref{L:known}(i).
\end{proof}

For some references about semidistributive lattices, 
see K. Adaricheva, V.\,A. Gorbunov, 
V.\,I. Tumanov~\cite{AGT},
G.~Cz\'edli, L. Ozsv\'art, and B. Udvari \cite{COU}, 
S.\,P. Avann~\cite{sA68}, and
R.P. Dilworth~\cite{rD40}.

In the rest of this section, 
we outline the proof of the following variant 
of Theorem~\ref{T:main}.

\begin{named}{Theorem \ref{T:main}'} 
If $L$ is a finite meet-semidistributive lattice, 
then the meet of the dual atoms is the least congruence 
$\bgd$ with $L/\bgd$ distributive.
\end{named}

This result and its proof is due to Ralph Freese,
who emailed me after this paper was completed. 
Prossor Freese kindly suggested to me to
``feel free to use it in your paper''.

The following sketch of the proof (slightly edited) 
is from his email.

Since the class $\B D$ of distributive lattices 
is closed under subdirect products, we get the first statement.  

\begin{lemma}\label{L:1}
%Lemma~\ref{L:1}
Every lattice L has a unique minimal congruence
$\bgd$ such that $L/\bgd$ is distributive. 
\end{lemma}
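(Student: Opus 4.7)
The plan is to show existence of $\bgd$ by constructing it as the meet of all congruences with distributive quotient, and then to verify that this meet itself has a distributive quotient, using the fact that distributive lattices form a variety.

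First, I would set
\[
   \bgd = \bigwedge \{\, \bga \in \Con L \mid L/\bga \text{ is distributive}\,\}.
\]
The set on the right is nonempty because $\one \in \Con L$ and $L/\one$ is the one-element lattice, which is distributive. So $\bgd$ is well defined, and by construction it is the smallest candidate; the only thing left is to verify that $L/\bgd$ is itself distributive.

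For this, I would invoke the standard subdirect representation: the natural map
\[
   L/\bgd \longrightarrow \prod_{\bga \in \Theta} L/\bga,
   \qquad x/\bgd \longmapsto (x/\bga)_{\bga \in \Theta},
\]
where $\Theta$ is the indexing set above, is a well-defined lattice embedding precisely because $\bgd$ is the meet of the $\bga$'s (two elements are $\bgd$-related iff they are $\bga$-related for every $\bga \in \Theta$). The target is a product of distributive lattices, hence distributive, since distributivity is an equational property and is therefore preserved under direct products and sublattices. Consequently $L/\bgd$ is distributive, which forces $\bgd \in \Theta$ and hence $\bgd$ is the unique minimum element of $\Theta$.

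There is no real obstacle here; the only point that deserves care is the justification that the map above is an embedding, which is just the definition of $\bgd$ as the meet, together with the observation that meets in $\Con L$ are computed pointwise as intersections of equivalence relations. For finite $L$, $\Theta$ is finite and the argument simplifies further, but the general argument costs nothing extra.
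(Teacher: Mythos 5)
Your proof is correct and takes essentially the same route as the paper, which disposes of this lemma with the single remark that the class of distributive lattices is closed under subdirect products; you have simply spelled out that observation (meet of all congruences with distributive quotient, nonemptiness via the total congruence, subdirect embedding into a product of distributive quotients). No issues.
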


$L/\bgd$ is called the \emph{reflection} of $L$
into $\B D$.

\begin{lemma}\label{L:2}
%Lemma~\ref{L:2}
For the congruence $\bgd$ of Lemma~\ref{L:1},
we have
\[
   \bgd = \MM \E C,
\]
where $\E C$ is the
set of those dual atoms of $\Con L$, 
whose corresponding quotient is $\SC 2$,
the two-element chain.
\end{lemma}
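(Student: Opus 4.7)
The plan is to prove the two inequalities $\bgd \leq \MM \E C$ and $\MM \E C \leq \bgd$ separately. The first is immediate from Lemma~\ref{L:1}: for every $\gamma \in \E C$ the quotient $L/\gamma \iso \SC 2$ is distributive, hence the minimality property of $\bgd$ forces $\bgd \leq \gamma$; taking the meet over $\E C$ yields $\bgd \leq \MM \E C$.

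For the reverse inequality, I would first show that $\E C$ coincides with the set of all dual atoms of $\Con L$ that lie above $\bgd$. One inclusion was just observed. For the other, suppose $\gamma$ is a dual atom of $\Con L$ with $\gamma \geq \bgd$. By the Second Isomorphism Theorem, $L/\gamma$ is isomorphic to a quotient of the distributive lattice $L/\bgd$, so $L/\gamma$ is itself distributive; moreover, $\gamma$ being a dual atom of $\Con L$ is equivalent to $L/\gamma$ being simple, and the only simple distributive lattice is $\SC 2$, so $\gamma \in \E C$.

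The final step is to note that the correspondence $\gamma \mapsto \gamma/\bgd$ induces an isomorphism between the interval $[\bgd, \one]$ of $\Con L$ and $\Con(L/\bgd)$. Since $L/\bgd$ is a finite distributive lattice, $\Con(L/\bgd)$ is a finite Boolean lattice, and the meet of its dual atoms is $0_{\Con(L/\bgd)}$. Pulling this identity back along the isomorphism gives $\MM \E C = \bgd$ in $\Con L$, as required.

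There is no serious obstacle in the argument; it is an exercise in the Correspondence Theorem combined with two standard facts about distributive lattices: quotients of distributive lattices are distributive, and the congruence lattice of a finite distributive lattice is Boolean (with $\SC 2$ as the unique simple distributive lattice). The only point that requires a moment's care is checking that every dual atom above $\bgd$ actually has $\SC 2$ as its quotient, but this falls out of the characterization of simplicity in terms of dual atoms of $\Con L$.
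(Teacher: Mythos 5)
Your proof is correct. The paper itself gives no argument for this lemma---it is stated as part of Freese's proof sketch without justification---so there is nothing to compare against; your argument (minimality of $\bgd$ gives $\bgd \leq \MM \E C$; the Correspondence Theorem identifies $\E C$ with the dual atoms of the interval $[\bgd, \one] \iso \Con(L/\bgd)$, which is a finite Boolean lattice since $L/\bgd$ is a finite distributive lattice, so their meet is its zero, namely $\bgd$) is exactly the standard argument one would supply, and it fills the gap cleanly.
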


\begin{lemma}\label{L:3}
%Lemma~\ref{L:3} 
Every meet-semidistributive lattice with $0$ 
has $\SC 2$ as a homomorphic image.
\end{lemma}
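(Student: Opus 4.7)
The plan is to produce a surjective lattice homomorphism $L \to \SC 2$ out of a single atom of $L$. Since the lemma is used only in the finite setting of the main theorem of this section, I shall assume $L$ is finite with $|L|\geq 2$ (the singleton case is vacuous), so $L$ admits an atom; fix an atom $a$ of $L$.

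The key step, and the only place where hypothesis \lp SD$_\mm$\rp\ enters, is to verify that $a$ is \emph{join-prime}, that is, $a \leq x \jj y$ implies $a \leq x$ or $a \leq y$. Supposing $a \leq x \jj y$ while $a \not\leq x$ and $a \not\leq y$, the atom property collapses both $a \mm x$ and $a \mm y$ to $0$, and \lp SD$_\mm$\rp\ applied to the triple $(a, x, y)$ then yields $a \mm (x \jj y) = 0$, contradicting $a \leq x \jj y$.

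With $a$ join-prime, I would define the equivalence relation $\bga$ on $L$ whose two blocks are $\set{x \in L : a \not\leq x}$ (containing $0$) and $\set{x \in L : a \leq x}$ (containing $a$), and verify that $\bga$ is a congruence. Meet compatibility is immediate from the fact that $a \leq u \mm v$ iff $a \leq u$ and $a \leq v$, while join compatibility is exactly the contrapositive of the join-primeness of $a$, combined with the trivial monotonicity direction. Both blocks are non-empty (as $a \not\leq 0$), so $L/\bga \iso \SC 2$, providing the required homomorphic image.

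There is no substantial obstacle; the whole argument rests on the one-line observation that atoms of a meet-semidistributive lattice are join-prime. The only delicate point is the existence of an atom, which is free in the finite setting but is the reason this argument does not extend verbatim to meet-semidistributive lattices without atoms.
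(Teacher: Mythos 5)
Your proof is correct. Note that the paper itself offers no proof of this lemma: it is stated without argument as part of Ralph Freese's emailed sketch, so there is nothing to compare against line by line. Your route is the standard one and it does the job. The heart of the matter is exactly the observation you isolate: in a meet-semidistributive lattice an atom $a$ is join-prime, since $a \not\leq x$ and $a \not\leq y$ force $a \mm x = a \mm y = 0$, whence \lp SD$_\mm$\rp{} gives $a \mm (x \jj y) = 0$, so $a \not\leq x \jj y$. The partition of $L$ into the principal filter of $a$ and its complement is then a congruence with two classes, and both classes are nonempty because $a \not\leq 0$. Your caveat about needing an atom is also the right one to flag: the lemma as stated covers arbitrary meet-semidistributive lattices with $0$, and your argument only establishes the case where an atom exists (in particular the finite case with $|L| \geq 2$), but that is all that is used in the application, where the lemma is invoked for the finite simple quotients $L/\bga$ with $\bga$ a dual atom of $\Con L$ to conclude that each such quotient is $\SC 2$.
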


We apply these lemmas to prove Theorem \ref{T:main}'. 
Since the lattice $L$ is finite 
and meet-semidistributive,
it follows that every homomorphic image 
of $L$ is also meet-semidistributive, 
and so every dual atom of $\Con L$ is in $\E C$.

\end{document}